\renewcommand{\widehat}{\hat}
\newtheorem{theorem}{Theorem}[section]
\newtheorem{lemma}[theorem]{Lemma}
\newtheorem{claim}[theorem]{Claim}
\newtheorem{question}[theorem]{Question}
\theoremstyle{definition}
\newtheorem{definition}[theorem]{Definition}
\theoremstyle{remark}
\newtheorem{remark}[theorem]{Remark}
\numberwithin{equation}{theorem}
\newcommand{\converge}{\!\!\downarrow}
\newcommand{\diverge}{\!\!\uparrow}
\renewcommand{\phi}{\varphi}
\newlength{\dhatheight}
\DeclareMathOperator \dom{dom}
\title[Rado Path Decomposition Theorem]{The Rado Path Decomposition
  Theorem}
\author[Cholak]{Peter~A.~Cholak}
\address[Cholak]{Department of Mathematics\\ University of Notre Dame\\ 
  Notre Dame, IN 46556-5683}
\email[Cholak]{Peter.Cholak.1@nd.edu}
\urladdr[Cholak]{\url{http://www.nd.edu/~cholak}}
\author[Igusa]{Gregory Igusa}
\email[Igusa]{gregigusa@gmail.com}
\author[Patey]{Ludovic Patey}
\address[Patey]{Institut Camille Jordan\\
Universit\'e Claude Bernard Lyon 1\\
43 boulevard du 11 novembre 1918\\
F-69622 Villeurbanne Cedex}
\email[Patey]{Ludovic.Patey@computability.fr}
\urladdr[Patey]{\url{http://ludovicpatey.com}}
\author[Soskova]{Mariya I. Soskova}
\address[Soskova]{Department of Mathematics\\ University of Wisconsin--Madison\\
480 Lincoln Dr, Madison WI 53706}
\email[Soskova]{msoskova@math.wisc.edu}
\urladdr[Soskova]{\url{http://www.math.wisc.edu/~msoskova}}
\author[Turetsky]{Dan Turetsky} \address[Turetsky]{Department of
  Mathematics\\ Victoria University of Wellington \\New Zealand}
\email[Turetsky]{dan.turetsky@vuw.ac.nz}
\urladdr[Turetsky]{\url{http://tinyurl.com/dturetsky}}
\date{\today}
\thanks{This work was partially supported by a grant from the Simons
  Foundation (\#315283 to Peter Cholak).  Igusa was partially
  supported by EMSW21-RTG-0838506. Soskova's research was supported by
  National Science Fund of Bulgaria grant \#01/18 from 23.07.2017 and
  by National Science Foundation grant DMS-1762648. }
\subjclass[2010]{05C55 05C70 03D80 03B30 03F35}
\begin{document}

%% someone else can write this if needed.
% \begin{abstract}

% \end{abstract}

\maketitle

\section{Introduction}

Fix $c:[\mathbb{N}]^2 \rightarrow r$, an $r$-coloring of the pairs of
natural numbers.  An ordered list of distinct integers,
$a_0, a_1, a_2 \ldots a_{i-1}, a_i, a_{i+1} \ldots$ is a
\emph{monochromatic path for color $k$}, if, for all $i\geq 1$,
$c(\{a_{i-1},a_i\})=k$.  The empty list is considered a path of any
color $k$.  Similarly, the list of one element, $a_0$, is also
considered a path of any color $k$. For any monochromatic path of
length two or more the color is uniquely determined.  Paths can be
finite or infinite.  Since all paths considered in this article are
monochromatic we will drop the word monochromatic.

\begin{definition}
  Let $c$ be an $r$-coloring of $[\mathbb{N}]^2$ ($[n]^2$). A
  \emph{path decomposition for $c$} is a collection of $r$ paths
  $P_0, P_1,\dots, P_{r-1}$ such that $P_j$ is a path of color $j$ and
  every integer (less than $n$) appears on exactly one path.
\end{definition}

Improving on an unpublished result of Erd\H{o}s, Rado \cite{MR0485504}
published a theorem which implies:

\begin{theorem}[Rado Path Decomposition, $\mathsf{RPD}$, or $\mathsf{RPD}$$_r$]\label{original}
  Every $r$-coloring of the pairs of natural numbers has a path
  decomposition.
\end{theorem}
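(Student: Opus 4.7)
The plan is to prove Theorem~\ref{original} in two stages: first establish the finite analogue (every $r$-coloring of $[n]^2$ has a path decomposition) and then lift to $\mathbb{N}$ via a compactness argument. For the finite analogue I would induct on $n$, maintaining as invariant that at each stage the current decomposition $P_0, \dots, P_{r-1}$ comes equipped with a distinguished endpoint $e_j$ for every nonempty $P_j$ (a singleton path contributes its unique vertex; empty paths are treated as wildcards). The inductive step incorporates the new vertex $n$ into the decomposition of $\{0,1,\dots,n-1\}$.

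The main case split is as follows. If some $j$ satisfies $c(\{n,e_j\})=j$, then $P_j$ is extended by appending $n$, which becomes the new distinguished endpoint, and we are done. Otherwise $c(\{n,e_j\}) \neq j$ for every $j$, and one must rearrange. For $r=2$ this is elementary: inspecting $c(\{e_0,e_1\})$ produces a color $k$, after which moving $e_{1-k}$ off $P_{1-k}$ onto $P_k$ and then appending $n$ yields a valid decomposition. For $r\geq 3$, one analyzes the colored $K_{r+1}$ induced on $\{n,e_0,\dots,e_{r-1}\}$ and argues that some short monochromatic segment inside it can be spliced into the paths so that $n$ is absorbed. I expect this rearrangement to be the main obstacle, and to require an auxiliary lemma likely proved by its own induction on $r$, together with a pigeonhole argument on the colors appearing among the edges $\{e_j,e_k\}$; the subtlety is that moving endpoints between paths can conflict with the distinguished-endpoint invariant for the \emph{other} paths, so the lemma must produce a simultaneous rearrangement of all involved paths at once.

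Once the finite version is in hand, the infinite case follows by a standard compactness argument. The set of path decompositions of the initial segment $\{0,1,\dots,n-1\}$, suitably encoded as tuples of finite sequences, forms a finitely branching tree under the restriction map, and each level is non-empty by the finite version. K\"onig's Lemma then yields an infinite branch whose limit is a path decomposition of $\mathbb{N}$ in the sense of the definition (allowing finite or infinite paths).
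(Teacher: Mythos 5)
Your proposal fails at both stages, and for reasons the paper itself makes explicit. First, the finite analogue you want to prove by induction is \emph{false} for $r \ge 3$: by the result cited in the paper as Theorem~\ref{three}, for every $r>2$ there are infinitely many $m$ admitting an $r$-coloring of $[m]^2$ with \emph{no} path decomposition. So no amount of cleverness in your rearrangement lemma for the colored $K_{r+1}$ on $\{n,e_0,\dots,e_{r-1}\}$ can rescue the inductive step; the statement you are inducting toward is simply not true. (The induction does go through for $r=2$; that is Theorem~\ref{finite}, and your $r=2$ case is essentially its proof.)

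Second, even in the $r=2$ case where the finite version holds, the compactness step is broken. The path decompositions of the sets $\{0,\dots,n-1\}$ do not form a tree under restriction: deleting the vertex $n$ from a path decomposition of $\{0,\dots,n\}$ can sever a path into two disconnected pieces, and the inductive construction really does move earlier vertices between paths, so a decomposition at level $n+1$ need not restrict to one at level $n$. In the other direction, a path decomposition of $\mathbb{N}$ restricted to $\{0,\dots,n-1\}$ need not be a path decomposition of $[n]^2$, because the infinite paths may link small vertices through much larger ``connector'' vertices; this is exactly how the paper's constructions operate, joining consecutive same-colored vertices $m$ and $n$ through some large $v\in N(m,j)\cap N(n,j)$. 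The paper notes this failure of compactness explicitly in Section~\ref{sec:ramsey}. The actual proofs take a different route entirely: they assign each vertex a unique color via a notion of largeness (a non-principal ultrafilter, a cohesive set, or a genericity argument) and build the infinite paths directly in stages, never passing through the finite theorem.
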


In Section~\ref{proofs}, we provide three different proofs of this
result.  The first proof makes use of an ultrafilter on the natural
numbers.  This ultrafilter proof is clearly known but has only
recently appeared in print, see Lemma~2.2 of \cite{Elekes:2015wd}.  The
remaining proofs are interesting new modifications of the ultrafilter
proof.

All of the proofs presented are highly non-computable.  In
Section~\ref{sec:halting}, we show that a non-computable proof is
necessary. A coloring $c:[\mathbb{N}]^2 \rightarrow r$ is
\emph{stable} if and only if $\lim_y c(\{x, y\})$ exists
% \todo{Dan: I added ``for every $x$''}
for every $x$.  We show there is a computable stable $2$-coloring $c$
of $[\mathbb{N}]^2$ such that any path decomposition for $c$ computes
the halting set. In Section~\ref{sec:2}, we give a non-uniform proof
of the fact that the halting set can compute a path decomposition for
any computable $2$-coloring.

In Section~\ref{uniform} we show that if our primary $\Delta^0_2$
construction from Section~\ref{sec:2} fails then it is possible to
find a path decomposition which is as simple as possible: one path is
finite and the other computable.  But even with this extra knowledge,
we show, in Theorem~\ref{nonuniform}, that there is no uniform proof
of the fact that the halting set can compute a path decomposition for
any computable $2$-coloring.  In Theorem~\ref{nonuniform2}, we improve
this to show no finite set of $\Delta^0_2$ indices works.

% In Section~\ref{uniform} we show that there is no uniform proof of
% this fact, and we also show a result that suggests that the
% noncomputable nature of the problem is incompatible with the
% nonuniform nature of the problem.

In Section~\ref{sec:logic} we show that the halting set can also
compute a path decomposition for stable colorings with any number of
colors.  The rest of Section~\ref{sec:logic} discusses Rado Path
Decomposition within the context of mathematical logic and, in
particular, from the viewpoint of computability theory and reverse
mathematics.  In Section~\ref{sec:ramsey}, we discuss two differences
between Rado Path Decomposition and Ramsey Theorem for pairs.

Most of the sections can be read in any order, although
Section~\ref{uniform} relies on Section~\ref{sec:2}, and
Section~\ref{sec:logic} relies on Section~\ref{proofs}.

Our notation is standard. Outside of Sections~\ref{mov} and
\ref{sec:logic}, and possibly Section~\ref{uniform}, our use of
computability theory and mathematic logic is minimal and very
compartmentalized.  One needs to be aware of the halting set and the
first few levels of the arithmetic hierarchy. A great reference for
this material is \citet{MR2920681}.  For more background in reverse
mathematics, including all notions discussed in Sections~\ref{mov} and
~\ref{sec:logic}, we suggest \citet{MR3244278}.

Our interest in the $\mathsf{RPD}$ was sparked by \citet{Soukup:2015wv}. Thanks!

\subsection{$\mathsf{RPD}$ within the framework of computable
  combinatorics}\label{mov}

In computable combinatorics we consider combinatorics principles as
instances-solutions pairs and compare the computational power of
solutions.  With $\mathsf{RPD}_r$, an instance is $r$-coloring and the
solution is a path decomposition.  With Ramsey's theorem for pairs and
$r$ colors, $\mathsf{RT}^2_r$, the instance is an $r$-coloring and the
solution is a homogenous set, see Remark~\ref{RT22}.  Another classic
combinatorics problem is Weak K\"{o}nig's Lemma, $\mathsf{WKL}$; where
an instance is an infinite subtree of $2^{< \omega}$ and a solution is
an infinite path through the tree.

There are many ways one can compare the computational power of
solutions.  For example, since the halting set computes an infinite
path through every computable instance of $\mathsf{WKL}$,
Theorem~\ref{halting}, implies that every solution to $\mathsf{RPD}_r$
computes a solution to every computable instance of $\mathsf{WKL}$.
So $\mathsf{RPD}_r$ is stronger than $\mathsf{WKL}$.  While we show
$\mathsf{RPD}_r$ is strictly stronger than $\mathsf{RT}^2_r$, the
relationship between their solutions is not as straightforward.  One
can consider a Turing ideal, $\mathcal{I}$, an \emph{ideal model} of a
combinatorics principle if every instance in $\mathcal{I}$ has a
solution in $\mathcal{I}$.  Our work shows that every ideal model of
$\mathsf{RPD}_r$ is a model of $\mathsf{RT}^2_r$ but the converse
fails.  We also show that solutions to computable instances of
$\mathsf{RPD}_2$ cannot compute solutions to computable instances of
$\mathsf{RT}^2_r$.  A postive answer to Question~\ref{PA} would imply
that computable instances of $\mathsf{RPD}_3$ can compute solutions to
computable instances of $\mathsf{RT}^2_r$.

Another way to measure the strength of these principles is as a
statement in second order arithmetic. Here we think of combinatorics
principles as a set existence theorem, the combinatorics principle
implies that a solution exists for every instance that exists. Here we
show that over $\mathsf{RCA}_0$, the system corresponding to the
existence of the computable sets, that $\mathsf{RPD}_r$ is equivalent
to $\mathsf{ACA}_0$, the system corresponding to the existence of the
arithmetic sets.

There are many more combinatorics principles and ways one can compare
the computational power of combinatorics principles.  We cannot
discuss them all here, again we suggest \citet{MR3244278} as a
starting point.

\section{Some Proofs of $\mathsf{RPD}$}\label{proofs}

In this section we will provide several proofs of $\mathsf{RPD}$. We need to
start with some notation and definitions.  The union of pairwise
disjoint sets is written as $X_0 \sqcup X_1 \ldots \sqcup X_i$.  Two
sets are
% \todo{Mariya: Isn't the widely used term ``equal mod finite''?}
equal modulo finite, $X=^*Y$, if and only if their symmetric
difference $ X\triangle Y$ is finite.  If
$X_0 \sqcup X_1 \ldots \sqcup X_i =^* Z $ then the $X_j$'s are
pairwise disjoint and their union is equivalent modulo finite to $Z$.
If $Z= \mathbb{N}$ then $X_0 \sqcup X_1 \ldots \sqcup X_i$ is almost
form  a   
partition of $\mathbb{N}$, that is there is a finite set $F$ such that
$F\sqcup X_0 \sqcup X_1 \ldots \sqcup X_i = \mathbb{N}$.

\begin{definition}
  A collection $\mathcal{U}$ of subsets of $\mathbb{N}$ is an
  \emph{ultrafilter} (on $\mathbb{N}$) if and only if
  $\emptyset\notin \mathcal{U}$, $\mathcal{U}$ is closed under
  superset, $\mathcal{U}$ is closed under finite intersections, and, for all
  $X \subseteq \mathbb{N}$, either $X \in \mathcal{U}$ or its
  complement $\overline{X} \in \mathcal{U}$. An ultrafilter is
  \emph{non-principal} if and only if, for all $a \in \mathbb{N}$,
  $\{a\} \notin \mathcal{U}$.
\end{definition}

We will call a subset $X$ of $\mathbb{N}$ \emph{large} if and only if
$X \in \mathcal{U}$.

\begin{remark} \label{remark1}
  % \todo{Mariya: changed this around a
  % bit!}
  The two key facts that we will need about a non-principal
  ultrafilter $\mathcal{U}$ are as follows.
  \begin{enumerate}
  \item $\mathcal{U}$ does not have finite members. (This statement
    follows by an easy induction on the size of the finite set.)
  \item If
  $$X_0 \sqcup X_1 \ldots \sqcup X_i =^* \mathbb{N}$$
  then \emph{exactly one} of the $X_j$ is large. (No more than one of
  these sets of can be large, because if $X_{j_0}$ and $X_{j_1}$ are
  distinct then they have an empty intersection. Assume that for all
  $j$, $\overline{X}_j \in \mathcal{U}$.  It follows that
  $\bigcap_{j\leq i} \overline{X}_j =^* \emptyset \in \mathcal{U}$.
  But no finite set can be a member of a non-principal ultrafilter,
  giving us the desired contradiction.)  \end{enumerate}
\end{remark}

For the rest of this section a coloring
$c:[\mathbb{N}]^2 \rightarrow r$ will be fixed.

\subsection{Ultrafilter Proof}

The existence of a non-principal ultrafilter on the natural numbers is
a strong assumption that unfortunately cannot be shown in Zermelo
Fraenkel set theory, see \citet{MR0176925}; the axiom of choice is
sufficient see \citet{MR1940513}.
%% written by Blass on mathoverflow: in any of the formulations
%% mentioned (so far) in the comments, the ultrafilter lemma is
%% independent of ZF but weaker than AC. That the strongest form (all
%% filters can be extended to ultrafilters) doesn't imply AC is a
%% theorem of J.D. Halpern and A. Lévy ["The Boolean prime ideal
%% theorem does not imply the axiom of choice" in Axiomatic Set
%% Theory, Proc. Symp. Pure Math. XIII part 1, pp. 83-134]. That ZF
%% doesn't prove even the weakest form (there exists a nonprincipal
%% ultrafilter on some set) is a theorem of mine ["A model without
%% ultrafilters," Bull. Acad. Polon. Sci. 25 (1977) pp. 329-331],
%% building on S. Feferman's construction of a model with no
%% non-principal ultrafilters on the set of natural numbers ["Some
%% applications of the notions of forcing and generic sets,"
%% Fundamenta Mathematicae 55 (1965) pp. 325-345].
Nevertheless, we give a proof of $\mathsf{RPD}$ that uses this assumption,
because we believe that it provides insight into the combinatorics of
this statement. Later in this section we will give alternative proofs
of $\mathsf{RPD}$ that do not use a non-principal ultrafilter.

Let $\mathcal{U}$ be a non-principal ultrafilter.  We will denote the
set of neighbors of $m$ with color $i$ by
  $$N(m,i) = \{n : c(\{m,n\}) = i \}.$$ 
  
  Note that $N(m,i)$ is computable in our coloring $c$. Furthermore,
  if we fix $m$ then the sets $N(m,i)$ where $i<r$ form almost form a
  partition of $\mathbb{N}$, just $m$ is missing.  By
  Remark~\ref{remark1} for every $m$ there is a unique $j < r$ such
  that $N(m,j)$ is {large}. Let
  $A_j = \{m : N(m,j) \text{ is large}\}$. The sets $A_j$ where $j<r$
  also partition $\mathbb{N}$.  If $m \in A_j$ then we will say that
  $m$ has color $j$. It follows that every natural number is assigned
  in this way a unique color.
  
  For any pair of points $m<n$ in $A_j$, $N(m,j) \cap N(n,j) $ is
  large. So there are infinitely many $v \in N(m,j) \cap N(n,j) $.
  For all such $v$, $c(m,v)=c(v,n)=j$. Note that any such $v$ is
  likely much larger than $m$ and $n$ and not necessarily in
  $A_j$. %\todo{Dan: Changed ``of the same color'' to ``in
  % $A_j$''.}

  \begin{proof}[Construction]\label{construction}
    We will construct our path decomposition
    $P_0, P_1, \dots, P_{r-1} $ in stages.  Let $P_{j,0}=\emptyset$
    for all $j<r$. The path $P_{j,0}$ is the empty path of color $j$.
    Assume that for each $j < r$, $P_{j,s}$ is a finite path of color
    $j$ such that if $P_{j,s}$ is nonempty then its last member is of
    color $j$ (i.e.\ in $A_j$).  Assume also that every $t< s$
    % \todo{Dan: Changed $t \leq s$ to $t < s$, since otherwise it's
    % false for $s = 0$.}
    appears in one of the $P_{j,s}$.  If $s$ already appears in one of
    the $r$ paths, then let $P_{j,s+1}=P_{j,s}$ for all $j<r$.
    Otherwise, $s$ has some color $k$.  For $j\neq k$, let
    $P_{j,s+1}=P_{j,s}$. If $P_{k,s}$ is empty, then let
    $P_{k,s+1} = \{s\}$.  Otherwise, let $e$ be the end of the path
    $P_{k,s}$.  There is a $v$ not appearing in any of the finite
    paths $P_{j,s}$ such that $v \in N(e,k) \cap N(s,k)$.  Add $v$ and
    $s$ to the end of $P_{k,s}$ in that order to get $P_{k,s+1}$.  To
    complete the construction we set $P_j = \lim_s P_{j,s}$ for every
    $j<r$.  The desired path decomposition is given by
    $P_0, P_1, \dots. P_{r-1}$.
  \end{proof}

  The proof described above is very close to the well known
  ultrafilter proof of Ramsey's theorem for pairs. To illustrate this
  we include this proof below.  An infinite set $H$ is
  \emph{homogenous} for $c$ if and only if $c([H]^2) $ is
  constant. Ramsey's theorem for pairs is the statement that every
  $r$-coloring of the pairs of natural numbers has an infinite
  homogeneous set.

  \begin{remark}\label{RT22}[Proof of the existence of a homogenous set for $c$]
    Recall that $A_0, A_1, \dots A_{r-1}$ gives a partition of
    $\mathbb{N}$. Fix the unique $j$ such that $A_j$ is large. We can
    \emph{thin} $A_j$ to get an infinite homogenous set $H$ of color
    $j$ as follows: we build an infinite sequence
    $\{h_n\}_{n\in\mathbb{N}}$ of elements in $A_j$ by induction so
    that $H = \{ h_0, h_1, \ldots \}$ is as desired. Let $h_0$ be the
    least element of $A_j$. Suppose that we have constructed a
    homogeneous set $\{h_0,\dots, h_i\}\subseteq A_j$.  Since
    $A_j\cap \bigcap_{k\leq i}N(h_k,j)$ is the finite intersection of
    large sets, it is also large and hence infinite. We define
    $h_{i+1}$ to be the least member of
    $A_j \cap \bigcap_{k\leq i}N(h_k,j)$ that is larger than $h_i$.
  \end{remark}

  \subsection{Cohesive Proof}\label{Cohesive_Proof_Subsection}

  As noted above, we would like to remove the use of the non-principal
  ultrafilter from the proof of $\mathsf{RPD}$. For this we will extract the
  specific relationship that $\mathcal{U}$ had with the sets $N(m,j)$.

 \begin{remark}\label{largeness}
   Reflecting on the above construction, we see that the important
   things about largeness were that
   \begin{enumerate}
   \item for every $m$ there is a unique $j < r$ such that $N(m,j)$ is
     large,
   \item large sets are not finite, and
   \item the intersection of two large sets is large.
   \end{enumerate}
  
 \end{remark}

  \begin{definition}
    An infinite set $C$ is \emph{cohesive} with respect to the
    sequence of sets $\{X_n\}_{n\in \mathbb{N}}$ if and only if for
    every $n$ either $C \subseteq^* X_n$ or
    $C\subseteq^* \overline{X_n}$.%\todo{Mariya: Made this more
    % general}
  \end{definition}

  \begin{lemma}
    There is a set $C$ that is cohesive with respect to the sequence
    $\{N(m,j)\}_{j<r, m\in\mathbb{N}}$.
  \end{lemma}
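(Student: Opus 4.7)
The plan is a standard iterated thinning / diagonal argument. First, since the index set $\{(m,j) : m \in \mathbb{N},\, j<r\}$ is countable, enumerate the sequence $\{N(m,j)\}_{m\in\mathbb{N},\, j<r}$ as a single sequence $X_0, X_1, X_2, \ldots$ of subsets of $\mathbb{N}$. It suffices to build an infinite set $C$ such that for every $n$, either $C\subseteq^* X_n$ or $C\subseteq^* \overline{X_n}$.

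Next, I would construct a nested descending sequence of infinite sets
\[
\mathbb{N}=C_0 \supseteq C_1 \supseteq C_2 \supseteq \cdots
\]
by recursion. Given an infinite $C_n$, observe that $C_n=(C_n\cap X_n)\sqcup(C_n\cap\overline{X_n})$, so at least one of these two sets is infinite; let $C_{n+1}$ be one such infinite piece (choosing $C_n\cap X_n$ if it is infinite, otherwise $C_n\cap\overline{X_n}$). By construction, $C_{n+1}$ is infinite and is either contained in $X_n$ or in $\overline{X_n}$.

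Then extract a cohesive set by the usual diagonal step: choose $c_0<c_1<c_2<\cdots$ with $c_n\in C_n$, which is possible at every stage because each $C_n$ is infinite. Let $C=\{c_0,c_1,c_2,\ldots\}$. To verify cohesiveness, fix $n$; then for every $k\geq n+1$ we have $c_k\in C_k\subseteq C_{n+1}$, and $C_{n+1}$ is contained in $X_n$ or in $\overline{X_n}$. Hence all but finitely many elements of $C$ lie in the same side, giving $C\subseteq^* X_n$ or $C\subseteq^* \overline{X_n}$, as required.

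There is no real obstacle here: the argument is the canonical existence proof for cohesive sets relative to a countable family, and the only non-constructive step is the choice of which side is infinite at each stage, which is why such a $C$ is not guaranteed to be computable (though it can be taken to be $\Delta^0_2$ in the double jump of the sequence). Nothing beyond the fact that a union of two sets being infinite forces one of them to be infinite is needed.
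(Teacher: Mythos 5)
Your proof is correct and is essentially the same argument as the paper's: the paper builds a decreasing sequence of infinite "reservoir" sets $R_s$, refining by $N(m,i)$ or its complement (whichever intersection is infinite) and extracting the least element at each stage, which is exactly your nested sequence $C_n$ together with the diagonal choice of $c_n$. The verification of cohesiveness is identical, so no further comment is needed.
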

   
  \begin{proof}
    Once again we will use a stagewise construction.  We will
    construct two sequences of sets: $\{C_s\}_{s\in \mathbb{N}}$ and
    $\{R_s\}_{s\in \mathbb{N}}$. The first sequence will be increasing
    and the second decreasing with respect to the subset relation.
    Start with $C_0 = \emptyset$ and $R_0 = \mathbb{N}$.  Fix some
    indexing of all pairs $\langle m,i \rangle$. Inductively assume
    that, for all $\langle m,i \rangle < s$, %\todo{Dan: Changed
    % $\leq s$ to $< s$, as otherwise it's false for $s = 0$.}
    either $R_s \subseteq N(m,i)$ or
    $R_s \subseteq \overline{N(m,i)}$, $C_s$ is finite, $R_s$ is
    infinite, and $C_s$ and $R_s$ are disjoint. At stage $s+1$, let
    $c_{s}$ be the least element of $R_s$.  Let
    $C_{s+1} = C_s \cup \{c_s\}$.  Assume that
    $s = \langle m,i \rangle $.  Since $R_s$ is an infinite set, at
    least one of $R_s \cap N(m,i)$ or $R_s \cap \overline{N(m,i)}$ is
    infinite. %\todo{Mariya Changed "exactly one to at least one".}
    If $R_s \cap N(m,i)$ is infinite let
    $R_{s+1} = \big( R_s \cap N(m,i) \big) - \{c_s\}$.  Otherwise let
    $R_{s+1} = \big( R_s \cap \overline{N(m,i)} \big) - \{c_s\}$.
    $C = \lim_s C_s = \{ c_0, c_1, \ldots \}$ is the desired cohesive
    set.
  \end{proof}

  Fix such a set $C$. We can now redefine largeness by using $C$
  instead of an ultrafilter. Call a set $X$ \emph{large} if and only
  if $C \subseteq^* X$.  This new notion of largeness has the three
  key properties outlined above with respect to the sets $N(m,i)$: for
  every $m$ there is a unique $j < r$ such that $N(m,j)$ is large,
  because $C$ cannot be a subset of two disjoint sets, even if we
  allow a finite error; large sets are not finite, because $C$ is
  infinite; and the intersection of two large sets is large, because
  if $C \subseteq^* X$ and $C \subseteq^* Y$ then
  $C \subseteq^* X\cap Y$.  We can now repeat the original
  construction
  % in Remark~\ref{construction}
  using this notion of largeness to produce a path decomposition.

  \subsection{Stable Colorings}
  Recall that a coloring $c$ is \emph{stable} if and only if for every
  $m$ the limit $\lim_n c(\{m,n\})$ exists.  Rephrasing this property
  in terms of sets of neighbors, we get that there is a unique
  $j < r $ such that $N(m,j)$ is %\todo{Dan: changed ``coinfinite'' to
  % ``cofinite''.}
  cofinite.  So to construct a path decomposition for stable colorings
  we do not even need a cohesive set. We can redefine large to mean
  cofinite and use once again the original construction.
  % the construction from Remark~\ref{construction}.

  \subsection{Generic Path Decompositions}

  In this section we will provide a forcing-style construction of a
  path decomposition. To avoid confusion with our ultrafilter proof,
  our construction will use sequences of conditions rather than poset
  filters.
  % \todo{Greg: added an explanation as to why we don't use filters
  % here}

  \emph{Conditions} are tuples $(P_0, P_1 \ldots P_{r-1}, X)$ such
  that
  \begin{enumerate}
  \item $X\subseteq \mathbb{N}$ is infinite,
  \item $P_j$ is a finite path of color $j$ for every $j<r$,
  \item %\todo{Dan: Added disjointness.}
    no integer appears on more than one of the paths, and
  \item if $P_j$ is nonempty and $e_j$ is its last element then
    $X\subseteq^* N(e_j,j)$ (so $e_j$ has color $j$ with respect to
    $X$).
  \end{enumerate}
  It follows that
  $(\emptyset, \emptyset, \ldots, \emptyset, \mathbb{N})$ is a
  condition, because it trivially satisfies the third requirement.  A
  condition
  $(\widehat{P}_0, \widehat{P}_1, \ldots, \widehat{P}_{r-1},
  \widehat{X})$ \emph{extends} $(P_0, P_1, \ldots, P_{r-1}, X)$ if and
  only if, for all $j$, $P_j$ is an initial subpath of
  $\widehat{P}_j$, and $\widehat{X} \subseteq X$. Unlike Mathais
  forcing, the new elements of our paths $\widehat{P}_j$ need not be
  elements of $X$.

  % \todo{Dan: Reworded this.}
  % \todo{Greg: Reworded this so that genericity implies meeting dense
  % sets}
  Given a sequence of conditions
  $\langle{C}_i\rangle_{i\in \mathbb{N}}$ such that for every $i$,
  $C_{i+1}$ extends $C_i$, we think of this sequence as approximating
  a tuple of paths as follows.

  If $C_i = (P^i_0, P^i_1, \ldots, P^i_{r-1}, X^i)$, then the sequence
  $\langle{C}_i\rangle$ approximates the tuple of paths
  $(\tilde{P}_0, \tilde{P}_1, \ldots \tilde{P}_{r-1})$ where
  $\tilde{P}_j = \lim_i P^i_j$.

  Such a tuple of paths need not be a path decomposition, since it
  might happen that some integer does not appear on any of the limit
  paths. The purpose of the $X$ values in the conditions will be to
  ensure that the approximated paths do form a path decomposition if
  the sequence $\langle{C}_i\rangle$ is \emph{generic} (defined
  below).

  A set of conditions $\mathcal{D}$ is \emph{dense} if every condition
  is extended by a condition in $\mathcal{D}$.  A sequence
  $\langle{C}_i\rangle$ \emph{meets} $\mathcal{D}$ if there is some
  $i$ such that $C_i \in \mathcal{D}$.
  
  Given any collection of dense sets, a sequence is
  $\langle{C}_i\rangle$ \emph{generic} for that collection if it meets
  every $\mathcal{D}$ in that collection.  Note that if we have a
  countable collection of dense sets $\mathcal{D}_i$ then it is
  straightforward to build a generic sequence for that collection, by
  inductively choosing each $C_{i+1}$ to extend $C_i$ and be in
  $\mathcal{D}_{i+1}$.

  Let $\mathcal{D}_i$ be the set of conditions
  $(P_0, P_1, \ldots, P_{r-1}, X)$ such that $i$ is on some path
  $P_j$.  The lemma below shows that $\mathcal{D}_i$ is dense. Any
  generic for $\{\mathcal{D}_i\}$ gives a path decomposition for $c$.

   \begin{lemma}
     For every $i$ the set $\mathcal{D}_i$ is dense.
   \end{lemma}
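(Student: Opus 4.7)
The plan is to fix a condition $(P_0,\dots,P_{r-1},X)$ and produce an extension in $\mathcal{D}_i$. If $i$ already appears on some $P_j$, the condition itself already lies in $\mathcal{D}_i$ and we are done. Otherwise, the task is to attach $i$ to the end of one of the paths in a way that preserves all four clauses in the definition of a condition.

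The first step is to choose the color $k$ under which $i$ will be absorbed. Since $\mathbb{N} = N(i,0) \sqcup \cdots \sqcup N(i,r-1)$ and $X$ is infinite, pigeonhole guarantees that $X \cap N(i,k)$ is infinite for some $k<r$. Fix such a $k$. This is the color for which attaching $i$ will be compatible with further shrinking $X$, because the new endpoint of $P_k$ will be $i$ and we will need the eventual witness set to be almost contained in $N(i,k)$.

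The second step splits on whether $P_k$ is empty. If $P_k = \emptyset$, set $\widehat{P}_k = \{i\}$, keep $\widehat{P}_j = P_j$ for $j \neq k$, and set $\widehat{X} = (X \cap N(i,k)) \setminus \{i\}$. If instead $P_k$ is nonempty with last element $e_k$, then by clause (4) of the old condition we have $X \subseteq^* N(e_k,k)$, so $X \cap N(e_k,k) \cap N(i,k)$ is still infinite; choose $v$ in this intersection not yet appearing on any $P_j$ and distinct from $i$, then let $\widehat{P}_k$ be $P_k$ with $v$ and then $i$ appended, and set $\widehat{X} = (X \cap N(i,k)) \setminus (\{i,v\} \cup \bigcup_j P_j)$.

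The third step is to verify that $(\widehat{P}_0,\dots,\widehat{P}_{r-1},\widehat{X})$ is indeed an extending condition and lies in $\mathcal{D}_i$. Infinity of $\widehat{X}$ follows because we only removed finitely many points from the infinite set $X \cap N(i,k)$. Each $\widehat{P}_j$ is still a path of color $j$ since the newly inserted edges $\{e_k,v\}$ and $\{v,i\}$ both have color $k$ by our choice of $v$. The disjointness of the paths is preserved because the only new vertices, $v$ and $i$, were explicitly chosen off the previous paths. For clause (4), for $j \neq k$ the endpoint has not changed, and $\widehat{X} \subseteq X \subseteq^* N(e_j,j)$; for $j = k$ the new endpoint is $i$, and $\widehat{X} \subseteq N(i,k)$ by construction. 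Finally, $i$ lies on $\widehat{P}_k$, so the extension is in $\mathcal{D}_i$. The only mildly delicate point is the existence of the connector $v$ in the nonempty case, but this is handled cleanly by using clause (4) of the original condition together with the pigeonhole choice of $k$.
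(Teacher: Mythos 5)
Your proof is correct and follows essentially the same route as the paper's: pick a color $k$ with $X \cap N(i,k)$ infinite by pigeonhole, then either start the path at $i$ or use clause (4) to find a connector $v \in X \cap N(e_k,k) \cap N(i,k)$ off the existing paths and append $v,i$, shrinking the reservoir to $X \cap N(i,k)$. You are slightly more explicit than the paper about choosing $v$ disjoint from the existing paths and about trimming $\widehat{X}$, but these are cosmetic refinements of the identical argument.
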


   \begin{proof}
     Fix $i$ and a condition $(P_0, P_1, \ldots, P_{r-1}, X)$.  If $i$
     is on one of the paths $P_j$ then we are done.  Otherwise, $X$ is
     an infinite set, so there must be a $j$ such that $N(i,j) \cap X$
     is infinite. If $k\neq j$ then let $\tilde{P}_k = P_k$.  Let
     $\tilde{X} = X \cap N(i,j)$.  If $P_j$ is empty let $\tilde{P}_j$
     be $i$.  Otherwise let $e$ be the end of $P_k$. Since
     $(P_0, P_1, \ldots, P_{r-1}, X)$ is a condition, there is a $v$
     such that $v \in N(e,j)\cap N(i,j)$. Let $\tilde{P}_j$ be $P_j$
     with $v$ and $i$ added to the end in that order. It follows that
     $(\tilde{P}_0, \tilde{P}_1 , \ldots \tilde{P}_{r-1}, \tilde{X} )$
     is a condition in $\mathcal{D}_i$ extending
     $(P_0, P_1, \ldots, P_{r-1}, X)$.
   \end{proof}

   The generic construction is very much in the style of Rado's
   original proof.  % In Rado's
   % proof the paths in conditions were allowed to be infinite and the
   % $X$ was the set of integers included .  Rado
   % showed that there was a such condition $C$ with no extension.
   % The
   % collection of paths in such a $C$ is path decomposition of $c$.

   \section{Path Decompositions which compute the halting
     set}\label{sec:halting}

   Recall the halting set
   $K = \{ e | (\exists s)\varphi_{e,s}(e) \converge\}$ is the set of
   codes $e$ for programs which, when started with input $e$, halt
   after finitely many steps. The halting set was one of the first
   examples of a set that is not computable.  The goal of this section
   is to show the following theorem.

  \begin{theorem}\label{halting}
    There is a computable stable $2$-coloring $c$ of $[\mathbb{N}]^2$
    such that any path decomposition of $c$ computes the halting set.
  \end{theorem}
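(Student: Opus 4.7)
The plan is to build the desired coloring $c$ by a computable stage construction that uses the standard approximation $\{K_s\}_{s\in\mathbb{N}}$ to $K$. The idea is to partition $\mathbb{N}$ into infinite ``columns'' $\{C_e\}_{e\in\mathbb{N}}$, one for each potential member of $K$, and to set up the coloring on each column so that the limit color of every $x\in C_e$ is $[e\in K]$. Concretely, whenever a new pair involving an element of $C_e$ is first reached, we color it using the current approximation $K_s$: if $e\notin K_s$ we color in a way compatible with $\ell(x)=0$ on $C_e$, and once $e$ enters $K$ at some stage $s_e$ we color all subsequently introduced pairs involving $C_e$ in a way that makes the eventual limit color $1$. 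Since $K_s$ is computable, $c$ is computable; and since $K_s(e)$ changes value at most once, each element's color vector stabilizes, so $c$ is stable.

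The core combinatorial property I would aim for is a ``rigidity'' statement: in any path decomposition $(P_0,P_1)$ of $c$, the column $C_e$ must be placed onto $P_{[e\in K]}$ up to a finite error. To arrange this, I would design the coloring so that if $e\notin K$ then each $x\in C_e$ has essentially \emph{no} color-$1$ neighbors (forcing $x$ onto $P_0$), while if $e\in K$ then all but finitely many $x\in C_e$ have essentially \emph{no} color-$0$ neighbors (forcing them onto $P_1$). I would additionally sprinkle in a few ``anchor'' elements whose limit colors are fixed and whose edges are arranged so that both $P_0$ and $P_1$ are guaranteed to contain at least two elements; this kills the otherwise-annoying ``singleton path'' exception, in which a single $x$ sits alone on the wrong path.

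Assuming this rigidity is in hand, the reduction from any path decomposition to $K$ is uniform: to decide $e\in K$, the Turing reduction queries the decomposition at a single element of $C_e$ (or at a pair of elements, to avoid the singleton exception) and outputs the index of the path containing it. Since the ``only finitely many exceptions'' property is symmetric, both $K$ and $\overline{K}$ are decidable with this one query, giving $K\leq_T(P_0,P_1)$ rather than merely enumeration reducibility.

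The main obstacle will be reconciling stability with the required rigidity. Stability only says that each vertex is cofinitely monochromatic in its limit color, and in general this is not enough to pin down which path a given element must lie on --- one can construct Hamiltonian-style color-$1$ paths that absorb nearly all of $\mathbb{N}$ while still respecting stability. The real work is therefore in choosing the between-column and anchor edges symmetrically and sparsely enough that the ``wrong-color'' neighbor set of each coding element is not only finite but so combinatorially restricted that no path decomposition can absorb infinitely many coding elements of $C_e$ into the wrong path. This balancing --- simultaneously a computable, stable, symmetric coloring with a tight control on cross-color edges --- is the technical heart of the argument; the rest of the proof is the routine verification that the construction does what it claims.
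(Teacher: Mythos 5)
There is a genuine gap, and it is in the step you yourself flag as the ``technical heart'': the rigidity you need is not merely hard to arrange --- in the strong form your reduction requires, it is impossible for a stable coloring. To force an element $x$ onto $P_0$ you need $x$ to have \emph{zero} color-$1$ neighbors (finitely many is useless: a path only needs one or two adjacent edges of its color at $x$), and symmetrically for $P_1$. Now fix any single $x\in C_e$ with $e\notin K$ and any $e'\in K$. Stability forces $c(\{x,y\})=0$ for all but finitely many $y$, hence for all but finitely many $y\in C_{e'}$; each such $y$ therefore has $x$ as a color-$0$ neighbor. So \emph{all but finitely many} elements of $C_{e'}$ have a color-$0$ neighbor, and your claim that cofinitely many of them have ``essentially no'' color-$0$ neighbors cannot hold in the form that forces them onto $P_1$. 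Even setting this aside and granting rigidity up to a finite exceptional set per column, the reduction itself does not go through: when $e\in K$, the elements of $C_e$ whose early edges were colored before $\varphi_e(e)$ halted retain color-$0$ edges and may legitimately sit on $P_0$, and the reduction cannot compute which elements of $C_e$ are ``late enough'' without already knowing when (indeed whether) $\varphi_e(e)$ halts. Searching the decomposition for \emph{some} element of $C_e$ on $P_1$ terminates only when $e\in K$, which gives exactly the enumeration-style reducibility you were trying to avoid.

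The missing idea is to extract from the decomposition not the \emph{bit} $K(e)$ but a \emph{bound on the halting time} of $\varphi_e(e)$. The paper's construction gives every $x$ a default color BLUE, changed at most once to RED, and colors $\{x,s\}$ at stage $s$ by the current default; infinitely many intervals $[k,2k+1]$ keep default BLUE forever, and a counting argument (a RED path can visit at most $k$ of the $k+1$ points of such an interval, since its neighbors on the path must lie below $k$) forces the BLUE path to be infinite. A marker $m_e$ is placed so that if $\varphi_e(e)$ halts at stage $s$ after $m_e$ is set, the whole block $[m_e,s+1]$ turns RED with everything larger; were the BLUE path to reach past $m_e$ only at some $t\le s$, it would be stranded inside a finite set, contradicting its infinitude. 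Hence the first BLUE-path element $t$ appearing after $[0,m_e]$ is covered satisfies $t>s$, and one decides $e\in K$ by checking $\varphi_{e,t}(e)\converge$. This converts the decomposition into a modulus for $K$ and yields a genuine Turing reduction; some such timing mechanism, rather than positional coding of bits into columns, appears to be essential here.
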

 
  We devote the rest of the section to the proof of this theorem.  For
  colors we will use RED and BLUE.  Once again a coloring $c$ is
  \emph{stable} if, for all $m$, $\lim_n c(m,n)$ exists.

  We will give a computable stagewise construction for $c$.  The goal
  will be to construct $c$ so that:
  \begin{enumerate}
  \item The BLUE path in any path decomposition is infinite;
  
  \item Any path decomposition can compute the elements of $K$ via the
    following algorithm: If $e$ is a natural number then the
    construction will associate a marker $m_e$ to $e$ in a way that is
    computable from any path decomposition for $c$.
    % \todo{Greg: the markers are not computable although they are
    % computable from the path decomposition}
    We enumerate the BLUE and RED paths until all numbers $x\leq m_e$
    have appeared on one of the two paths. Let $t$ be the next element
    on the BLUE path. Then $e\in K$ if and only if $\varphi_{e,t}(e)$
    is defined (i.e.\ $\varphi_{e}(e)$ halts after $\leq t$ many
    steps).
  
  \end{enumerate}
  
  Each $x \in \mathbb{N}$ will have a default color.  Initially it
  will be BLUE.  The default color of a number might be changed once
  during the construction to RED.  At stage $s$, we will define
  $c(\{x,s\})$ for every $x<s$ and we will always set this value to be
  the current default color for $x$.  So our construction will produce
  a stable coloring. To achieve our first goal, it will be sufficient
  to ensure that for all elements of infinitely many intervals
  $[k,2k+1]$ the default color BLUE is never changed.  This is because
  if all elements in the interval $[k,2k+1]$ are colored BLUE with
  every greater number, then, in any path decomposition, the BLUE path
  must contain a node in this interval: if $m$ is in this interval and
  on a RED path then the next and previous nodes on this RED path must
  be a number less than $k$, so the RED path can only contain at most
  $k$ of the nodes in this interval. The length of this interval is
  $k+2$, so at least one of the nodes in this interval must be on the
  BLUE path.  This idea is reflected in the way we associate markers
  $m_e$ to elements $e$.
  
  We will say that a number $k$ is \emph{fresh} at stage $s$ if and
  only if $k$ is larger than any number mentioned/used at any stage
  $t$ where $t \leq s$.  All markers $m_e$ are initially undefined,
  i.e.\ $m_{e,0}\diverge$.  At each stage $s$ before we proceed with
  the definition of $c(x,s)$ for $x<s$ we first update the markers:
  for the least $e$ where $m_{e,s-1}$ is not defined, we will select a
  fresh number $k$ and define $m_{e,s} = 2k+2$.  (Note that this means
  that if $n$ is fresh after stage $s$ then $n > 2k+2$.)  Unless we
  say otherwise (see below) at all later stages $t$ we will keep
  $m_{e,t} = m_{e,s}$.  It will follow that $\lim_s m_{e,s} = m_e$
  exists.

  We also update the default colors as follows. For every $e<s$ we
  check if $\varphi_{e,s-1}(e) \diverge$,
  $\varphi_{e,s}(e) \converge$, and $m_{e,s}$ is defined. If so we
  change the default color of all $x \in [m_{e,s}, s+1]$ to RED and
  make all $m_{i,s}$ undefined for all $i> e$.  If we can show that
  this construction satisfies our first goal, then we can easily argue
  that it also satisfies the second: Fix any path decomposition and
  assume that $t$ is the first element on the BLUE path after all
  numbers $x\leq m_e$ have shown up on one of the two paths. Suppose
  further that $\varphi_e(e)$ halts in $s$ many steps. We must show
  that $t>s$. If at stage $s$ we have that $m_{e,s}$ is not defined
  then $t>m_e> s$. If $m_{e,s}$ is defined and we assume that $t<s+1$
  then the BLUE path cannot be extended below $m_{e,s}$ because
  everything below $m_{e,s}$ has already been covered by one of the
  two paths, and it cannot be extended above $s+1$ because everything
  in the interval $[m_{e,s},s+1]$ is RED with everything larger than
  $s+1$.
  % then $t$ has color RED with every number that has not yet appeared
  % on either path and so the BLUE path cannot be extended.
  It follows that the BLUE path is finite, contradicting our
  assumption.
  
  For every $e$ the value of the marker $m_{e,s}$ can be cancelled at
  most $e$ many times and then stays constant, so
  $\lim_s m_{e,s} = m_e$ does exist. It is furthermore computable from
  any path decomposition by the following procedure. The marker for
  $0$ is never cancelled, so $m_{0} = m_{0,1}$. If we know the value
  of $m_e$ then we run the construction until we see the first stage
  $t_0$ such that $m_{e} = m_{e, t_0}$. It follows that after stage
  $t_0$ we can cancel $m_{e+1}$ only for the sake of $e$.  We can also
  figure out if $e\in K$ by looking for the first $t_1$ on the BLUE
  path after all numbers $x\leq m_e$ have shown up on one of the two
  paths and checking whether or not $\varphi_e(e)$ halts in $t_1$
  steps. Let $t = \max(t_0, t_1)+1$.  We claim that
  $m_{e+1, t} = m_{e+1}$. If $e\notin K$ then $m_{e+1}$ is not
  cancelled at any stage greater than $t_0$ and is defined by stage
  $t$. If $e\in K$ then $m_{e+1}$ can possibly be cancelled after
  stage $t_0$ but no later than at stage $t_1$ and so once again its
  final value will be defined by stage $t$.
  
  Finally, by induction on $e$, we will show that there are $e$
  intervals $[k,2k+1]$ where the default color BLUE for all $x$ in the
  interval is never changed and $2k+1 \leq m_{e}$.  Assume inductively
  this is true for all $e' \leq e$ and let $s$ be the stage when
  $m_{e+1,s} = m_{e+1}$ is defined. By construction $m_{e+1,s} $ is
  defined as $2k_{e+1}+2$ for some fresh $k_{e+1}>m_e$.  The default
  color for all $x$ in the interval $[k_{e+1}, 2k_{e+1}+1]$ is never
  changed from BLUE. \hfill $\Box$

  % \todo{Ludo says: Maybe add a paragraph to explain how we compute
  % the halting set from a decomposition. Mariya: I think I did that}

  \section{$2$-colorings} \label{sec:2}

  As we will see in this section $2$-colorings are very special.  For
  this section we will use BLUE and RED as our colors.  Perhaps one of
  the earliest published results on path decompositions is the
  following.

\begin{theorem}[\citet{MR0239997}]\label{finite}
  Every $2$-coloring $c$ of $[n]^2$ has a path decomposition.
\end{theorem}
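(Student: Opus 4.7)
My plan is to proceed by induction on $n$. This is the natural strategy since the ultrafilter/cohesive arguments from Section~\ref{proofs} are overkill in the finite setting and one expects a direct combinatorial argument. The base cases $n\leq 1$ are immediate: for $n=0$ take both paths empty, and for $n=1$ let the lone vertex be the sole member of one path while the other path is empty.

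For the inductive step, fix a $2$-coloring $c$ of $[n]^2$ and single out one vertex, say $v=n-1$. Applying the inductive hypothesis to the restriction of $c$ to $[n-1]^2$ yields a path decomposition $(P_R,P_B)$ of $\{0,\ldots,n-2\}$. Let $r$ and $b$ be the last vertices of $P_R$ and $P_B$ respectively, when these paths are nonempty. The task is to splice $v$ into the decomposition to produce a path decomposition of $[n]$.

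I would carry out a short case analysis on the edges incident to $v$. If $c(\{v,r\})$ is RED, append $v$ to $P_R$; if $c(\{v,b\})$ is BLUE, append $v$ to $P_B$. If one of the two paths is empty, simply make $v$ its sole vertex (a one-element path has any color). The only remaining possibility is that both paths are nonempty with $c(\{v,r\})$ BLUE \andd $c(\{v,b\})$ RED. In this exceptional case, use the edge $\{r,b\}$ as a pivot: if $c(\{r,b\})$ is RED, delete $b$ from the end of $P_B$ and attach $b$ followed by $v$ to the end of $P_R$, so that both new edges are RED; symmetrically, if $c(\{r,b\})$ is BLUE, delete $r$ from the end of $P_R$ and attach $r$ followed by $v$ to the end of $P_B$.

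The key insight is the pivot move in the exceptional case: the single endpoint of one path can always be shunted onto the other path to make room for $v$. The remaining verification is routine bookkeeping, namely that each truncated path is still a monochromatic prefix of the original, that each augmented path gains two vertices via two edges of the correct color, and that every vertex of $[n]$ ends up on exactly one path. I expect the main (mild) obstacle to be nothing more than organizing the case split cleanly and confirming that the swap preserves the partition of $[n]$.
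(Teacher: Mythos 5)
Your proposal is correct and is essentially the paper's own proof: the same induction on $n$, the same greedy attempt to append the new vertex to the end of a matching-colored path, and the same pivot move (the paper calls it a \emph{switch}) using the edge between the two endpoints in the exceptional case. The only cosmetic differences are the choice of base case and the explicit remark that a one-element path carries either color.
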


\begin{proof}
  We prove this statement by induction on $n$.  Clearly the statement
  is true for $[2]^2$.  Assume $c$ is a $2$-coloring of $[n+1]^2$. In
  particular $c$ induces a $2$-coloring on the subgraph $[n]^2$.

  By induction there is a path decomposition of $[n]^2$.  So there is
  a RED path, $P_r$ and a BLUE path, $P_b$, such that, if $i< n$, then
  $i$ is on exactly one of $P_r$ or $P_b$.

  If $P_r$ is empty then $P_b$ and $\{n\}$ is a path decomposition for
  $c$.  Similarly if $P_b$ is empty.

  Let $x_r$ be the end of the RED path and let $x_b$ be the end of the
  BLUE path.  Look at the color of the edge between $x_r$ and $n$.  If
  it is RED then add $n$ to the end of $P_r$ to get a path
  decomposition for $c$.  Similarly, if the color of the edge between
  $x_b$ and $n$ is BLUE then add $n$ to the end of $P_b$.

  Otherwise look at the color of the edge between $x_r$ and $x_b$.  If
  this is RED add $x_b,n$ (in that order) to the end of the RED path
  and remove $x_b$ from the end of the BLUE path. We will say that
  $x_b$ \emph{switches to RED}. If $c(\{x_r,x_b\})$ is BLUE then add
  $x_r,n$ (in that order) to the end of the BLUE path and remove $x_r$
  from the end of the RED path. In this case $x_r$ switches
  \emph{switches to BLUE}. In all cases we have obtained a path
  decomposition of $[n+1]^2$, thereby completing the inductive step.
\end{proof}

We are going to improve this theorem to the following:

\begin{theorem} \label{bound} If $c:[\mathbb{N}]^2\rightarrow 2$ then
  there is a $\Delta^c_2$ Path Decomposition. In particular, if $c$ is
  computable then it has a path decomposition that is computable from
  the halting set $K$.
\end{theorem}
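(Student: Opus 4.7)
The plan is to iterate the finite algorithm of Theorem~\ref{finite} over all initial segments $[n]^2$, producing a uniformly $c$-computable sequence of finite path decompositions $(P_R^n, P_B^n)$, and then to extract a limiting decomposition with help from $c'$. For each element $x$ we ask whether its color and neighbors in $(P_R^n, P_B^n)$ eventually stabilize; when they do, the limit can be read off using $c'$, since the predicate ``$x$'s status has become fixed by stage $n$'' is $\Sigma^{0,c}_1$ and hence decidable from $c'$, and the color and neighbors of $x$ at any sufficiently late stage then give its entry in the limit decomposition.

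The subtle point is that stabilization is not automatic. Although interior elements are not directly modified by the algorithm, the switching rule of Theorem~\ref{finite} can strip the active endpoint off one path, thereby promoting a previously interior element just before it to endpoint status, where it may itself subsequently switch color. In principle an element could migrate between colors multiple times, so the naive limit of $(P_R^n, P_B^n)$ need not exist. The proof therefore splits into two cases: in the well-behaved case, when every $x$ eventually stabilizes, the $c'$-procedure above yields a $\Delta^c_2$ path decomposition directly; in the pathological case, we invoke a separate structural argument (forward-referenced to Section~\ref{uniform}) to produce a decomposition in which one path is finite and the other is $c$-computable, which is certainly $\Delta^c_2$. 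In either case such a decomposition exists, and for computable $c$, since $c'$ is Turing-equivalent to $K$, the decomposition is $K$-computable.

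The proof is non-uniform precisely because it case-splits without a computable way to decide which case obtains; we only know existentially that one of the two cases furnishes the required decomposition. The main obstacle is the pathological case: demonstrating that persistent oscillation in the iterated finite construction forces enough color uniformity in $c$ to admit the trivially simple fallback decomposition. It is this analysis of the switching dynamics that motivates the forward reference to Section~\ref{uniform}.
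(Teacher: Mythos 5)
Your overall architecture (iterate the finite algorithm, extract a limit with $c'$, case-split on stabilization, fall back to a one-finite-path decomposition otherwise) superficially resembles the paper's proof, but the pathological case --- which is where all the work lives --- is not actually handled, and the forward reference to Section~\ref{uniform} does not plug the hole. Theorem~\ref{hardiseasy} is keyed to a different dichotomy: whether one can \emph{always strongly switch} in each color, where a strong switch is a switch (in the sense of Theorem~\ref{finite}) that provably can never be undone by any later path extension (Lemma~\ref{strong}). That $\Pi_3$ case division, together with the minimal-witness argument showing that in the ``cannot always strongly RED switch'' case the greedy construction never needs to switch at all, is the content you would need; it is not the same as ``some element oscillates forever in the naive iteration.'' The two dichotomies do not line up in either direction: the naive iteration can oscillate forever even when one can always strongly switch in both colors (in which case the paper builds an infinite--infinite $\Delta^c_2$ decomposition by deliberately interleaving forced strong RED and BLUE switches, not a one-finite-path decomposition), and your asserted implication ``persistent oscillation forces a computable decomposition with one finite path'' is exactly the substantial combinatorial claim that would need proof --- the paper neither proves it nor suggests it is true.

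Two smaller points. First, ``$x$'s status has become fixed by stage $n$'' for the naive ($c$-computable) iteration is $\Pi^{0,c}_1$, not $\Sigma^{0,c}_1$; this happens to be harmless since $c'$ decides $\Pi^{0,c}_1$ predicates, but note that the analogous predicate for the strong-switch construction is only $\Pi^{0,c}_2$ (the construction itself needs $c'$), which is precisely why the paper cannot simply ``require all switches to be strong and take limits'' and instead must force switches of both colors to occur. Second, even granting your well-behaved case, you have no argument that the dichotomy is exhaustive in a way that yields a $\Delta^c_2$ object in both branches. The missing idea, concretely, is Lemma~\ref{strong} (one-way switches) and the case analysis of Definition~\ref{state}; without them the proposal does not close.
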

% \todo{Ludo says: The path decomposition of a COMPUTABLE coloring is
% computable from the halting set. (K is not defined) Mariya: it is
% defined in the previous section} \todo{Ludo says: The current proof
% is nonuniform, but it does not mean that we can prove that there is
% no uniform proof. Mariya: I agree, so I changed the statement. }

The rest of this section is devoted to the proof of this theorem. This
proof will be nonuniform. We will also discuss other issues along the
way. Our first goal is to understand why we cannot simply iterate
Theorem~\ref{finite} infinitely often to get such a proof. We need to
examine the path constructed in Theorem~\ref{finite} very closely.

\begin{definition}
  Suppose $\tilde{P}_b$ is a BLUE path and $\tilde{P}_r$ a RED path.
  Then the pair $(\tilde{P}_b, \tilde{P}_r)$ is a \emph{one-step path
    extension} of $(P_b,P_r)$ if exactly one of the following holds:
  \begin{enumerate}
  \item $\tilde{P}_b$ is $P_b$ with one additional element at the end
    and $\tilde{P}_r = P_r$, or
  \item $\tilde{P}_r$ is $P_r$ with one additional element at the end
    and $\tilde{P}_b = P_b$, or
  \item $\tilde{P}_b$ is the path $P_b$ with the last element $x_b$
    removed and $\tilde{P}_r$ is $P_r$ with $x_b$ and some integer $x$
    added in that order at the end, (in this case $x_b$
    \emph{switches} to RED), or
  \item $\tilde{P}_r$ is the path $P_r$ with the last element $x_r$
    removed and $\tilde{P}_b$ is $P_b$ with $x_r$ and some integer $x$
    added in that order at the end, (in this case $x_r$
    \emph{switches} to BLUE).
  \end{enumerate}
 
  % \todo{Mariya: I
  % suggest a one-step extension instead os special path
  % extension. Dan: Agreed.  More descriptive.  Also, I rephrased the
  % definition that I think is more concise and no less clear.}
  $(\tilde{P}_b,\tilde{P}_r)$ is a \emph{path extension} of
  $(P_b, P_r)$ if it can be obtained from $(P_b,P_r)$ by a sequence of
  one-step path extensions.  Also, if $(\tilde{P}_b,\tilde{P}_r)$ is a
  {path extension} of $(P_b, P_r)$, $\tilde{P}_r={P}_r$, and $x$ is
  the last element of $\tilde{P}_b$, then we say that
  $(\tilde{P}_b,\tilde{P}_r)$ is a \emph{BLUE path extension of
    $(P_b, P_r)$ to $x$}.  We similarly define a \emph{RED path
    extension of $(P_b, P_r)$ to x}.
\end{definition}

Note that if $x$ is on one of $P_b$ or $P_r$ and
$(\tilde{P}_b,\tilde{P}_r)$ is a path extension, % of $P_b$ and $P_r$,
then $x$ is on one of $\tilde{P}_b$ or $\tilde{P}_r$.

The proof of Theorem~\ref{finite} shows:

\begin{lemma}\label{step}
  Given any two finite disjoint paths $P_b$ and $P_r$ and an integer
  $n$ not on either path, we can computably in $c$ find a one-step
  path extension $(\tilde{P}_b,\tilde{P}_r)$ such that $n$ appears on
  exactly one of these paths.
\end{lemma}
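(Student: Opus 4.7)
The plan is to mimic the case analysis of the inductive step in the proof of Theorem~\ref{finite}, observing that each option is of one of the four one-step types listed in the preceding definition, and that each branch queries $c$ only finitely often.

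First dispose of the empty cases. If $P_r$ is empty, set $\tilde{P}_r = [n]$ and $\tilde{P}_b = P_b$; this is a type~(2) extension, and $[n]$ is trivially a valid RED path. Symmetrically if $P_b$ is empty. Now assume both paths are nonempty with last elements $x_b$ and $x_r$. Query $c(\{x_r, n\})$: if RED, append $n$ to $P_r$ (type~(2)). Otherwise query $c(\{x_b, n\})$: if BLUE, append $n$ to $P_b$ (type~(1)). In the remaining case $c(\{x_r, n\}) = \textrm{BLUE}$ and $c(\{x_b, n\}) = \textrm{RED}$, so query $c(\{x_r, x_b\})$: if RED, remove $x_b$ from $P_b$ and append $x_b, n$ (in that order) to $P_r$ (type~(3), valid since the two new RED edges $\{x_r, x_b\}$ and $\{x_b, n\}$ are both RED); if BLUE, remove $x_r$ from $P_r$ and append $x_r, n$ (in that order) to $P_b$ (type~(4), valid since the two new BLUE edges $\{x_r, x_b\}$ and $\{x_r, n\}$ are both BLUE).

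Each branch makes at most three queries to $c$, so the entire procedure is computable from $c$. There is no real obstacle here: the lemma simply repackages the single inductive step of Theorem~\ref{finite} as a one-step path extension, and records the obvious computability in $c$. The only small care required is to check in each switching case that both newly introduced edges of the extended path carry the asserted color, which the case split arranges automatically; disjointness of $\tilde{P}_b$ and $\tilde{P}_r$ is immediate from the fact that $n$ is not on $P_b \cup P_r$ and that types~(3) and~(4) merely relocate an endpoint from one path to the other.
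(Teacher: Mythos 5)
Your proposal is correct and is essentially the paper's own argument: the paper proves Lemma~\ref{step} simply by observing that the inductive step of Theorem~\ref{finite} already establishes it, and your case analysis (empty-path cases, then the three queries $c(\{x_r,n\})$, $c(\{x_b,n\})$, $c(\{x_r,x_b\})$ with the two switching cases) reproduces that step verbatim, with the correct verification that both new edges in a switch carry the right color. Nothing further is needed.
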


We cannot generalize this lemma for more than $2$ colors.
%% cannot make the following make sense.  Someone else can try or just
%% drop it.
% With $r$ colors, where $r >2$, we cannot always find a path from the
% one end of one of the $r$ paths, $P_i$ to $n$ by switching the end
% of another of the $r$ paths to $P_i$.
In fact, Theorem~\ref{finite} fails for more than $2$ colors.

\begin{theorem}[\citet{MR3194196}] \label{three} Given any $r > 2$
  there are infinitely many $m$ such that there is an  $r$-coloring $c$
  of $[m]^2$ without a path decomposition.
\end{theorem}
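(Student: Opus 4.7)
The plan is to prove the theorem in two steps: (i) produce, for each $r\geq 3$, at least one size $m$ admitting a bad $r$-coloring of $K_m$; (ii) upgrade a single bad example to infinitely many.

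For step (i), the heart of the argument is the case $r=3$, which requires an explicit construction. The natural candidate is a \emph{tripartite} coloring: partition the vertex set into three blocks $V_0, V_1, V_2$ of carefully chosen sizes and let $c(\{u,v\})$ depend only on which blocks $u,v$ sit in, so each color class breaks into a vertex-disjoint union of a small, controlled number of components (for instance, two disjoint cliques, or a clique together with a complete bipartite piece). Because any monochromatic path must lie inside a single connected component of its color class, one gets a hard upper bound on the number of vertices a color-$j$ path can span. The block sizes are then chosen so that even the best alignment of three vertex-disjoint monochromatic paths still leaves at least one vertex uncovered, contradicting the existence of a path decomposition. For the general case $r>3$, I would bootstrap trivially by inflating the palette: keep the same $K_{m_0}$ and the same $3$-coloring, but declare the extra $r-3$ colors to be simply unused. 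In any hypothetical $r$-path decomposition, each unused color must receive the empty path, reducing the problem to the $3$-color case already ruled out.

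For step (ii), I would show how to pass from a bad $r$-coloring on $K_n$ to a bad $r$-coloring on $K_{n+1}$, and iterate. The construction adjoins a fresh vertex $u$ connected to every old vertex by a single fixed color $k$. Given any putative path decomposition $(P_0,\dots,P_{r-1})$ of the augmented graph, $u$ lies on some $P_j$. If $j\neq k$, then $u$ has no color-$j$ neighbors, so $P_j=\{u\}$ and removing $u$ yields a path decomposition of the original $K_n$ (with $P_j$ empty), contradicting badness. If $j=k$ and $u$ is an endpoint of $P_k$, deleting $u$ again recovers a bad decomposition of $K_n$. The only remaining case is that $u$ is interior to $P_k$, which is handled by a splicing step requiring a color-$k$ edge between $u$'s two neighbors in the base coloring; one chooses the initial bad example (or the color $k$) so that this splice is always available.

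The main obstacle is step (i) at $r=3$: producing the explicit coloring and verifying by a clean combinatorial invariant, such as a capacity/pigeonhole argument summing the maximum path lengths across the three color classes, that no decomposition exists. A secondary obstacle is the interior-splicing case in step (ii); if no configuration of the base example makes the bridging edge automatic, one falls back on a more involved extension operation (for instance, adjoining several new vertices simultaneously with a tailored pattern of inter-color edges) to guarantee bad sizes $m$ of arbitrarily large cardinality.
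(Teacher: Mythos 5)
The paper does not prove this statement at all --- it is quoted from Pokrovskiy \citet{MR3194196}, so there is no in-paper argument to compare against. Judged on its own terms, your proposal has genuine gaps.

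First, the entire content of the theorem lives in your step (i) for $r=3$, and you have not actually produced it: you describe a ``natural candidate'' (a block coloring with sizes ``carefully chosen'') and then list the verification as an obstacle. A plan to find a counterexample is not a counterexample; the explicit coloring and the counting argument showing that three monochromatic paths cannot cover it are exactly what Pokrovskiy's paper supplies and what is missing here. Second, your reduction from $r$ colors to $3$ colors is incorrect as stated. You claim that in an $r$-path decomposition ``each unused color must receive the empty path,'' but by the paper's conventions (Section 1) a single vertex is a path of \emph{every} color. So the $r-3$ unused colors may each absorb one vertex, and a bad $3$-coloring of $[m]^2$ only yields that $[m]^2$ minus some $r-3$ vertices cannot be covered by three monochromatic paths --- a strictly stronger property than what step (i) establishes. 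You would need the $3$-color construction to be robust under deleting $r-3$ vertices, or a direct construction for each $r$.

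Third, your step (ii) does not close. Adjoining a vertex $u$ joined to everything in color $k$ handles the cases where $u$ is a singleton path or an endpoint of $P_k$, but if $u$ is interior to $P_k$ with neighbors $a,b$, deleting $u$ splits $P_k$ into two color-$k$ paths, giving $r+1$ paths on $[n]$; that does not contradict badness of the base coloring, and there is no reason the edge $\{a,b\}$ should be color $k$ so that you can splice. Your fallback (``choose the base example so the splice is always available'') is an unproved assumption about a construction you have not exhibited. The standard route is to make the construction itself parametric, producing bad colorings of $[m]^2$ for an explicit infinite family of $m$ directly, rather than bootstrapping from a single instance.
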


As our first attempt to prove Theorem~\ref{bound}, we will iterate
Lemma~\ref{step} infinitely often to build paths $P_{b,s}$ and
$P_{r,s}$ by stages.  Start with $P_{b,0} = P_{r,0} = \emptyset$.  At
stage $s+1$, apply Lemma~\ref{step} to $P_{b,s}$ and $P_{r,s}$ and the
least integer not on either path $n$ to get $P_{b,s+1}$ and
$P_{r,s+1}$.  Once we have constructed these sequences, we need a way
to extract from them two paths $P_b$ and $P_r$ and then try to argue
that they form a path decomposition. We can do this if the position of
every number eventually stabilizes. This idea is captured by the
following definition.

\begin{definition}%\todo{Dan: reworded this definition.}
  Suppose that, for every natural number $s$, \\
  $P_s = x_0^s, \dots, x_{k_s}^s$ is a finite BLUE (RED) path.  We
  define the BLUE (RED) path
  $\lim_s P_s = x_0, x_1, \ldots, x_{n} ,\ldots $ by
  $x_n = \lim_s x_n^s$ as long as $\lim_s x_i^s$ exists for all
  $i \le n$.  If there is an $i \le n$ for which $\lim_s x_i^s$ does
  not exist, then we leave $x_n$ undefined.
\end{definition}

Given the sequences $\{ P_{b,s}\}_{s\in\mathbb{N}} $ and
$\{P_{r,s}\}_{s\in\mathbb{N}}$, we know that every $n$ eventually
appears on one of $P_{b,s}$ and $P_{r,s}$ at some stage $s$ and
remains on either $P_{b,t}$ and $P_{r,t}$ at all stages $t>s$.  So if
every $n$ only switches between the two sides finitely often then the
pair $P_b$ and $P_r$ is a path decomposition.  The limit will exist,
although we might not have an explicit way to compute the limit.
% todo{Greg: reworked the above sentence. among other things, replaced
% "determine" with "compute")

However, it is possible for an $n$ to switch infinitely often. It is,
in fact, even possible to build a $c$ such that every number $n$
switches sides infinitely often. For such a $c$, it would be the case
that $\limsup_s |P_{b,s}| =\infty$ but $\lim_s P_{b,s}$ is empty, and
likewise for $P_r$.
% \todo{Greg: changed last sentence from "it is even possible to..."
% to "For such a c..." also replaced "finite" with "empty"}

% \todo{Mariya: I rephrased this paragraph. I did not understand what
% it means to switch x, so a changed it to n}

We have to alter our approach.  We will still build our path
decomposition as the stagewise limit of path extensions, although they
will no longer be one-step path extensions.  At each stage $s$ we will
have disjoint finite paths $P_{b,s}$ and $P_{r,s}$.  The pair
$(P_{b,s+1},P_{r,s+1})$ will be a path extension of
$(P_{b,s}, P_{r,s})$. The integers $x_{r,s}$ and $x_{b,s}$ will be the
ends of these paths at stage $s$.  When the stage is clear, we will
abuse notation and drop the $s$ in $x_{r,s}$ and $x_{b,s}$.  We will
need the following:

\begin{definition}
  % \todo{Ludo says: Singular vs Plural. Maybe we should say "The pair
  % $(\tilde{P}_b, \tilde{P}_r)$ is a path extension of $(P_b, P_r)$."
  % From Dan: I've implemented this change, here and elsewhere.}
  Suppose $(\tilde{P}_b,\tilde{P}_r)$ is a one-step path extension of
  $(P_b,P_r)$ obtained by Case (3) (i.e.\ $x_b$ switches to RED and is
  followed by $x$ on $\tilde{P}_r$.)  We say that $x_b$ \emph{strongly
    switches} to RED if there is no BLUE path extension of $(P_b,P_r)$
  to $x$.  We similarly define what it means for $x_r$ to
  \emph{strongly switch} to BLUE.
  
  We say $(\tilde{P}_b,\tilde{P}_r)$ forms a \emph{strong one-step
    path extension} of $(P_b,P_r)$ if the pair
  $(\tilde{P}_b,\tilde{P}_r)$ is a one-step path extension of
  $(P_b, P_r)$ via either Cases~(1) or (2), or via Cases~(3) or (4)
  with a strong switch.
  
  We say that $(\tilde{P}_b,\tilde{P}_r)$ forms a \emph{strong path
    extension} of $(P_b,P_r)$ if it can be obtained from $(P_b,P_r)$
  by a sequence of strong one-step path extensions.
\end{definition}

% \todo{Greg: I completely rewrote the following proof. I found a
% number of things about the previous phrasing unclear, both in terms
% of what was being argued, and why the argued thing was true.}
The following lemma is the key combinatorial property that will
provide stability to constructions that are performed using strong
path extensions.

\begin{lemma}\label{strong}
  If $n$ strongly switches to RED then $n$ can never switch back to
  BLUE by a path extension. The RED path up to $n$ is stable.
\end{lemma}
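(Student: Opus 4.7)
My plan is to argue by contradiction. Suppose $n$ strongly switches to RED via the one-step extension $(P_b, P_r) \to S_1 := (P_b \setminus \{n\}, P_r \cdot n \cdot x)$, and suppose that some subsequent path extension of $S_1$ ends with $n$ back on BLUE; I will then construct a BLUE path extension of $(P_b, P_r)$ to $x$, contradicting the definition of strong switch.

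The first step is a structural invariant. A one-step path extension modifies only the last one or two elements of a path, so interior elements never move. Consequently $n$ is pinned at position $|P_r|+1$ of the RED path from $S_1$ onward, and can leave RED only when it is the terminal element. Let $k$ be the first step at which $n$ leaves RED; then at $S_{k-1}$ we must have RED $= P_r \cdot n$, and step $k$ is a Case~(4) switch of $n$ to BLUE appending some new $w$, which records $c(\{v, n\}) = c(\{n, w\}) = $ BLUE, where $v$ is the end of the BLUE path $\beta^{(k-1)}$. Moreover, for $n$ to have reached the end of RED, the element $x$ (originally at position $|P_r|+2$) must have been removed first; let $j<k$ be the first step where this happens. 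At $S_{j-1}$ we have RED $= P_r \cdot n \cdot x$ with $x$ at its end, and step $j$ is a Case~(4) switch of $x$ to BLUE, recording $c(\{u, x\}) = c(\{x, y\}) = $ BLUE where $u$ is the end of $\beta^{(j-1)}$ and $y$ is the new element.

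The heart of the argument is to package the BLUE-edge data recorded by the trajectory into an explicit BLUE path extension of $(P_b, P_r)$ to $x$. Since RED $= P_r \cdot n \cdot x$ at $S_{j-1}$ contains no element of $P_b \setminus \{n\}$, a short induction showing that one-step extensions preserve the relative order of previously placed vertices implies that $P_b \setminus \{n\}$ appears as a subsequence of $\beta^{(j-1)}$ in its original order, with the predecessor $p$ of $n$ in $P_b$ as the last such vertex. The tail of $\beta^{(j-1)}$ after $p$, call it $g_1, \ldots, g_m$, consists of new vertices disjoint from $P_b \cup P_r \cup \{n, x\}$, and the BLUE-path structure gives $c(\{p, g_1\}) = c(\{g_i, g_{i+1}\}) = c(\{g_m, x\}) = $ BLUE. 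Starting from $(P_b, P_r)$, I plan to (i) perform the strong switch itself (placing $x$ on RED), (ii) execute $m$ Case~(1) moves to extend BLUE from $p$ through $g_1, \ldots, g_m$, (iii) perform the Case~(4) switch of $x$ to BLUE, and (iv) use the BLUE-edge witnesses around $n$ obtained from step $k$ as the basis for a short balancing sequence of Case~(3)/Case~(4) moves that cancels the extra $n$ from RED and leaves $x$ as the BLUE endpoint.

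The principal obstacle will be step (iv), because Case~(4) always appends an additional vertex after the one it pulls from RED, so naively switching $x$ and $n$ back leaves trailing vertices $y$, $w$, and a residue rather than $x$ at the end of BLUE. Making the balancing work requires a careful accounting of Case~(3) and Case~(4) moves and the order in which they act on $n$, $w$, $y$, and the residue, so that the net effect is zero on RED and exactly one on BLUE with $x$ in the final slot; the BLUE edges around $n$ recorded at step $k$ are precisely the ingredients that let this balancing be arranged. Once the construction is complete, the resulting object is a BLUE path extension of $(P_b, P_r)$ to $x$, contradicting the strong switch assumption. The second sentence of the lemma—that the RED path up to $n$ is stable—is then immediate, since $n$ never leaves position $|P_r|+1$ of the RED path, so the prefix $P_r \cdot n$ of RED is preserved throughout every path extension.
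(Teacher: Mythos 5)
Your contradiction target is the right one --- the definition of a strong switch forbids a BLUE path extension of $(P_b,P_r)$ to $x$ --- but the plan for producing one cannot succeed, and the obstacle you flag in step (iv) is not a bookkeeping difficulty but a genuine impossibility. A BLUE path extension must terminate with $\tilde{P}_r=P_r$ and with $x$ as the terminal element of $\tilde{P}_b$. Consider the last one-step extension in your proposed sequence that modifies the BLUE path. If it is Case (1) or Case (4), the new BLUE terminus is an integer that was on neither path immediately beforehand; since your step (i) places $x$ on RED and elements never leave the union of the two paths, that terminus cannot be $x$. If it is Case (3), then RED acquires two new elements ending in a fresh one, and every subsequent move (necessarily Case (2), since BLUE is no longer touched) only appends further fresh elements to RED, so RED cannot equal $P_r$ at the end. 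Hence no sequence of moves that begins by performing the strong switch can terminate in the configuration you need; the ``balancing'' of step (iv) does not exist, and the proof has a hole exactly at its crux.

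The missing idea is that the forbidden BLUE path extension should be built from Case (1) moves alone, by reading a BLUE path from $n$ to $x$ with fresh interior directly off the configuration at the moment $n$ returns to BLUE. This is what the paper does. It first isolates an order-preservation lemma for path extensions: if $u$ precedes $v$ on the RED path (or $v$ precedes $u$ on BLUE, or $u$ is on RED and $v$ on BLUE), then after any path extension one of these three relations still holds. Since $n$ precedes $x$ on RED immediately after the strong switch, once $n$ reappears on a BLUE path $P_{b,3}$, the lemma forces $x$ to appear on $P_{b,3}$ \emph{before} $n$; the segment of $P_{b,3}$ from $x$ to $n$, read backwards, is a BLUE path from $n$ to $x$, and the same lemma shows its interior avoids $P_b\cup P_r$ (old RED vertices could only appear after $n$ on $P_{b,3}$, and old BLUE vertices only before $x$). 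Appending this path to $P_b$, which ends at $n$, via Case (1) moves yields the forbidden BLUE path extension to $x$. Your order-preservation observation and the recorded BLUE edges are the right raw material; they need to be assembled this way rather than by replaying the trajectory's switches.
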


Before we prove Lemma \ref{strong}, we require a more basic
order-preservation lemma concerning path extensions.

\begin{lemma}\label{order preserving}
  % Assume $(\tilde{P}_b,\tilde{P}_r)$ is a path extension of
  % $(P_b,P_r)$. Assume that $n$ and $m$ are two numbers that both
  % appear in both $P_b$ and $\tilde{P}_b$. Then they are in the same
  % order in $\tilde{P}_b$ as they were in $P_b$.
  Assume $(\tilde{P}_b,\tilde{P}_r)$ is a path extension of
  $(P_b,P_r)$.
  % Assume that $n$ and $m$ are two numbers that both appear in both
  % $P_r$, and that $n$ appears before $m$ in $P_r$.
  Assume that $n$ and $m$ are two numbers such that one of the
  following holds.
  \begin{itemize}
  \item $n$ appears before $m$ in ${P}_r$.
  \item $m$ appears before $n$ in ${P}_b$.
  \item $n$ appears in ${P}_r$ and $m$ appears in ${P}_b$.
  \end{itemize}
  Then one of the following holds.
  \begin{itemize}
  \item $n$ appears before $m$ in $\tilde{P}_r$.
  \item $m$ appears before $n$ in $\tilde{P}_b$.
  \item $n$ appears in $\tilde{P}_r$ and $m$ appears in $\tilde{P}_b$.
  \end{itemize}
  % By symmetry, the analogue holds with two elements of $P_b$.
\end{lemma}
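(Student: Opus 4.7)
My plan is to introduce a natural linear order on $P_b \cup P_r$ that encodes all three alternatives of the lemma uniformly. For a pair $(P_b, P_r)$ of disjoint paths with $P_r = r_1, \ldots, r_j$ and $P_b = b_1, \ldots, b_k$, define a total order $\prec$ on $P_b \cup P_r$ by
\[
r_1 \prec r_2 \prec \cdots \prec r_j \prec b_k \prec b_{k-1} \prec \cdots \prec b_1,
\]
i.e., read $P_r$ forward and then $P_b$ backward. A direct inspection of the three listed cases shows that the hypothesis of the lemma is equivalent to ``$n \prec m$'' under the order attached to $(P_b, P_r)$, and likewise the conclusion is equivalent to ``$n \prec m$'' under the order attached to $(\tilde{P}_b, \tilde{P}_r)$. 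Hence the lemma reduces to the claim that any path extension preserves this order on all previously placed elements.

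By induction on the number of one-step extensions, it suffices to handle a single one-step extension, and there I examine the four cases of the definition. In Cases~(1) and (2), a new element $y$ is appended to $P_b$ or to $P_r$; in both cases the new order agrees with the old one on $P_b \cup P_r$ and merely inserts $y$ between $r_j$ and $b_k$. In Case~(3), where $b_k$ switches to RED and is followed by a new element $x$, the new paths $\tilde{P}_r = r_1, \ldots, r_j, b_k, x$ and $\tilde{P}_b = b_1, \ldots, b_{k-1}$ yield
\[
r_1 \prec \cdots \prec r_j \prec b_k \prec x \prec b_{k-1} \prec \cdots \prec b_1,
\]
which again agrees with the old order on $P_b \cup P_r$ and merely inserts $x$ between $b_k$ and $b_{k-1}$. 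Case~(4) is symmetric.

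The only mildly subtle step is the bookkeeping in Cases~(3) and (4), where an endpoint migrates from one path to the other; the concatenated-order viewpoint makes it transparent that this migration leaves the relative $\prec$-order of every previously placed element unchanged, which is exactly what the three-alternative conclusion is recording. I do not anticipate any further obstacle, and the writeup should consist of the definition of $\prec$, the equivalence of the two trichotomies with ``$n \prec m$'', and the one-step case analysis above.
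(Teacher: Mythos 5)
Your proposal is correct and follows essentially the same route as the paper: an induction on one-step extensions, checking in each of the four cases that the relative positions of previously placed elements are preserved (the paper phrases the invariant as ``only the last element of a path can switch, and the later element must switch before the earlier one''). Your explicit linear order $\prec$ obtained by reading $P_r$ forward and $P_b$ backward is a clean way to package the three-alternative invariant as a single statement, but it is the same argument.
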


\begin{proof}
  The proof is an easy induction argument using the fact that only the
  last element of a path can ever switch to the other path. Any time
  the elements attempt to switch which path they are on, the latter
  element must switch before the earlier element.
\end{proof}

We now proceed to prove Lemma \ref{strong}.

\begin{proof}[Proof (Lemma \ref{strong}).]
  Assume not. Let $(P_{b,0},P_{r,0})$, $(P_{b,1},P_{r,1})$,
  $(P_{b,2},P_{r,2})$, $(P_{b,3},P_{r,3})$ be pairs of finite paths
  such that
  \begin{enumerate}
  \item $(P_{b,1},P_{r,1})$ is a one-step path extension of
    $(P_{b,0},P_{r,0})$ in which $n$ strongly switches to RED,
  \item $(P_{b,2},P_{r,2})$ is a path extension of $(P_{b,1},P_{r,1})$
    in which $n$ never switches,
  \item $(P_{b,3},P_{r,3})$ is a one-step path extension of
    $(P_{b,2},P_{r,2})$ in which $n$ switches to BLUE.
  \end{enumerate}
  Let $m$ be the element following $n$ in $P_{r,1}$.

  By definition of a strong switch, we have that there is no BLUE path
  extension of $(P_{b,0},P_{r,0})$ to $m$.  In particular, there is no
  BLUE path from $n$ to $m$ that does not involve any integers
  (besides $n$) from $P_{b,0}$ or $P_{r,0}$.

  By hypothesis, $n$ is in $P_{b,3}$, so by Lemma \ref{order
    preserving}, $m$ must appear before $n$ in $P_{b,3}$.

  But then $P_{b,3}$ has both $n$ and $m$ in it, and thus there is a
  BLUE path from $n$ to $m$. This provides a contradiction provided
  that we can prove that this path $P$ does not involve any integers
  besides $n$ from $P_{b,0}$ or $P_{r,0}$.

  To show this, note that $n$ and $m$ are the last two elements of
  $P_{r,1}$. In particular, every element of $P_{r,0}$ appears before
  $n$ in $P_{r,1}$, and so by Lemma \ref{order preserving}, if it is
  in $P_{b,3}$, it must appear after $n$. (This does not happen,
  although we do not need this fact for this proof.) Likewise, every
  element, besides $n$, of $P_{b,0}$ is in $P_{b,1}$. Therefore, by
  Lemma \ref{order preserving}, it must appear before $m$ in
  $P_{b,3}$.

  Note that we now have that if $n$ strongly switches to RED, then the
  RED path up to $n$ is stable: $n$ can never switch back to BLUE, and
  so nothing can be added to or removed from the RED path before $n$.
\end{proof}

Below we will modify the initial construction and require that all
switches be strong.  This avoids the problem of instability discussed
above. We will from now on use only strong path extensions.

We note here that if our goal was only to provide another proof of
Theorem \ref{original} for $r = 2$, then we would be done.  The
analogue of Lemma \ref{step} is not true with strong one-step
extensions, but it is true with strong extensions, so we could simply
use the initial construction with strong extensions to provide a path
decomposition for $c$. However, this process might not produce a
$\Delta^c_2$ path decomposition for the following reason.

If there are infinitely many strong RED switches, but only finitely
many BLUE switches, then the RED path is stabilized in a way that
allows it to be computed in a $\Delta^c_2$ manner, but the BLUE path
can only be computed in a $\Delta^c_3$ manner. It is $\Delta^c_2$ to
recognize a strong switch, but it is $\Delta^c_3$ to recognize that an
element will never strongly switch in the future. We will discuss this
in more detail in Section~\ref{oracle}.
% The issue, as we will see in Section~\ref{oracle}, is that the
% presence of strong RED switches makes the RED path easy to compute,
% and the absence of strong BLUE switches also makes the RED path easy
% to compute.
For now, the key point is that we must sacrifice some of the
simplicity of the construction in order to provide a construction that
can be carried out by a computationally weaker oracle.

% \begin{theorem} \label{bound} If $c:[\mathbb{N}]^2\rightarrow 2$
%   then there is a $\Delta^c_2$ Path Decomposition. In particular, if
%   $c$ is computable then it has a path decomposition that is
%   computable from the halting set $K$.
% \end{theorem}

We now describe our construction explicitly. As suggested in the above
paragraph, the construction will depend on whether the number of
strong BLUE switches is finite or infinite and similarly for RED.
This leads us to a case-by-case analysis of our path decomposition.
In Section~\ref{uniform} we will show that there is no uniform way to
produce a $\Delta^c_2$ path decomposition, which implies that there is
no way to prove Theorem~\ref{bound} without some sort of case-by-case
analysis.

The following allows us to define our cases nicely.

\begin{definition}%[We can always strongly RED switch]
  \label{state}
  % \todo{Ludo says: We should state it as a definition. It is not
  % clear
  % what the status of a statement is: is it a claim? a definition?\\
  % Dan: Made it a definition.}
  For a coloring $c$, we will say that we can \emph{always strongly
    RED switch} if for every pair $(P_b, P_r)$ of disjoint finite BLUE
  and RED paths, there is a strong path extension
  $(\tilde{P}_{b}, \tilde{P}_{r})$ of $(P_b, P_r)$ such that there was
  a strong RED switch at some point during the path extension between
  $(P_b, P_r)$ and $(\tilde{P}_{b}, \tilde{P}_{r})$.

  We define being able to \emph{always strongly BLUE switch}
  similarly.
\end{definition}

\begin{lemma} \label{cannot} If the pair $({P}_b, {P}_r)$ witnesses
  that we cannot always strongly RED switch and
  $(\tilde{P}_b, \tilde{P}_r)$ is a strong path extension of
  $(P_b, P_r)$, then $(\tilde{P}_b, \tilde{P}_r)$ also witnesses that
  we cannot always strongly RED switch.
\end{lemma}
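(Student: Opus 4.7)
The plan is to argue by contradiction, exploiting the transitivity of strong path extensions together with the fact that the strongness of a one-step switch is a purely local property of the one-step extension.

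Suppose toward a contradiction that $(\tilde{P}_b, \tilde{P}_r)$ does \emph{not} witness that we cannot always strongly RED switch. Then there is a strong path extension $(\Hhat_b, \Hhat_r)$ of $(\tilde{P}_b, \tilde{P}_r)$ during which a strong RED switch occurs. By definition, both the extension from $(P_b, P_r)$ to $(\tilde{P}_b, \tilde{P}_r)$ and the extension from $(\tilde{P}_b, \tilde{P}_r)$ to $(\Hhat_b, \Hhat_r)$ are obtained by a sequence of strong one-step path extensions. Concatenating these two sequences produces a sequence of one-step path extensions witnessing that $(\Hhat_b, \Hhat_r)$ is a path extension of $(P_b, P_r)$.

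The main (and essentially only) point to verify is that the concatenated sequence is still a sequence of \emph{strong} one-step path extensions, and that the RED switch occurring in the second half is still strong in the combined sequence. This reduces to the observation that the notion of a strong switch depends only on the pair $(Q_b, Q_r)$ \emph{immediately before} that switch and on the integer $x$ being appended afterward: $x_b$ strongly switches to RED when there is no BLUE path extension of $(Q_b, Q_r)$ to $x$. This property refers to the current pair of paths and the coloring $c$ alone; it is not affected by how one arrived at $(Q_b, Q_r)$. Hence any one-step extension that is strong along the sequence starting from $(\tilde{P}_b, \tilde{P}_r)$ remains strong along the longer sequence starting from $(P_b, P_r)$.

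Therefore $(\Hhat_b, \Hhat_r)$ is a strong path extension of $(P_b, P_r)$ containing a strong RED switch, contradicting the assumption that $(P_b, P_r)$ witnesses that we cannot always strongly RED switch. The only step requiring care is making the locality observation precise, but given the wording of Definition~\ref{state} and of strong switching, this is immediate.
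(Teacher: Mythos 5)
Your proof is correct and follows essentially the same route as the paper's: the paper's proof is just the two-sentence observation that strong path extension is transitive, so any strong extension of $(\tilde{P}_b, \tilde{P}_r)$ containing a strong RED switch is also one of $(P_b, P_r)$. Your additional remark that strongness of a switch is a local property of the pair immediately preceding it (and hence survives concatenation of the two sequences) is exactly the content implicitly packed into the paper's appeal to transitivity.
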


% {Greg: changed the lemma from BLUE to RED, and also added a
% proof. The lemma is obvious, but the proof is short, and I feel like
% this flows better.}
\begin{proof}
  Strong path extension is transitive. If there is a path extension of
  $(\tilde{P}_b, \tilde{P}_r)$ that includes a strong RED switch, then
  that same path extension is also a path extension of
  $({P}_b, {P}_r)$ that includes a strong RED switch.
\end{proof}
 
Our construction of a path decomposition breaks down into three
different procedures depending on whether or not we can always
strongly BLUE and RED switch.

\subsection{We can always strongly BLUE and RED switch}

We will inductively define $(P_{b,s}, P_{r,s})$ by multiple stages at
once. For each $s$, $(P_{b,s+1}, P_{r,s+1})$ will be a strong path
extension of $(P_{b,s}, P_{r,s})$.

Start with $P_{b,0}= P_{r,0} = \emptyset$.

Let $k$ the least stage where $(P_{b,k},P_{r,k})$ has yet to be
defined. Let $x$ be the least integer not on either of the paths
$P_{b,k-1}$ and $P_{r,k-1}$.  If there is a BLUE path extension to
$x$, let $(P_{b,k},P_{r,k})$ by that path extension. If this fails,
try the same for RED.  If both fail, switch either $x_b$ or $x_r$ as
in Lemma~\ref{step} to get $P_{b,k}$ and $P_{r,k}$. It follows that
this switch is a strong switch.  Next we stabilize some initial
segment of our paths: Let $(P_{b,k+1},P_{r,k+1})$ be a strong path
extension of $(P_{b,k},P_{r,k})$ that includes a RED switch, and let
$(P_{b,k+2},P_{r,k+2})$ be a strong path extension of
$(P_{b,k+1},P_{r,k+1})$ that includes a BLUE switch.

% {Greg: the above got reworded with my new definition of path
% extension}

We then repeat for the next integer not yet on either path.

All switches are strong switches, and by Lemma~\ref{strong}, the
limits of these paths exist.  Since every integer is placed on our
paths at some stage, and since every integer can be switched at most
once, we have that every integer is on exactly one of the two limiting
paths.
% {Greg: I added the ``and since'' clause, which addresses something I
% used to be concerned with}

Therefore this construction gives a path decomposition.

\subsection{We cannot always strongly RED switch}

Let $(P_{b,0}, P_{r,0})$ witness that we cannot always strongly RED
switch, and furthermore assume that among all such witnesses the
length of $P_{r,0}$ is minimal.

Now consider $(P_{b,s}, P_{r,s})$ and $x$ the least number not on
these two finite paths.  If there is a BLUE path extension, to $x$,
use that extension for $(P_{b,k+1},P_{r,k+1})$.  If this fails try to
do the same for RED.

% \todo{Dan: I reworked this paragraph and the next.}
We claim that one of the two options listed above will always
work. Towards a contradiction, suppose that both fail.  To add $x$ we
must switch (like in Theorem~\ref{finite}).  Call the resulting pair
$(\tilde{P}_b, \tilde{P}_r)$.  Then $(\tilde{P}_b, \tilde{P}_r)$ is a
strong extension of $(P_{b,s}, P_{r,s})$. So if $x_b$ switches to RED
then it strongly RED switches. This is not possible, by our choice of
$(P_{b,0}, P_{r,0})$.

It follows that $c(\{x_b, x_r\}) = BLUE$ and $x_r$ strongly switches
to BLUE. By Lemma~\ref{cannot}, since $(\tilde{P}_b, \tilde{P}_r)$ is
a strong extension of $(P_{b,s}, P_{r,s})$, it must also witness that
we cannot always strongly RED switch.  Note that $\tilde{P}_r$ is
shorter than $P_{r,s}$, so by the minimality of $P_{r, 0}$, we know
that $P_{r,s} \neq P_{r,0}$.  So $x_r$ was added to the RED path at
some stage $t \leq s$.

Hence there is no BLUE path from $x_{b,t}$ to $x_r$ which is otherwise
disjoint from $P_{b,t}$ and $P_{r,t}$, as otherwise $x_r$ would have
been added to $P_{b,t}$.

On the other hand there is a BLUE path from $x_{b,t}$ to $x_{b}$,
witnessed by the fact that $x_{b,t}$ and $x_{b}$ are both on the BLUE
path at stage $s$. (By induction, our construction has no switches up
to this point, so $x_{b,t}$ is still on the blue path at stage $s$.)
We also have that the pair $x_{b}, x_r$ is colored BLUE, so there is a
BLUE path from $x_{b,t}$ to $x_r$ disjoint from $P_{b,t}$ and
$P_{r,t}$, a contradiction.
% {Greg: I changed the end from "there is" to "this allows us to
% create." The paragraph felt a bit too homogenous to parse easily.}
% {Greg: I added some more explanation in this paragraph and changed
% it back.}
\subsection{We cannot always strongly BLUE switch}

This case is dealt with in the same way as the previous one.

% \todo{Ludo says: Shouldn't we add a subsection "We cannot always
% strongly BLUE switch" and just say it is the symmetric case of the
% previous section? Mariya: I agree. }

\subsection{The use of the oracle $c'$} \label{oracle}

The existence of a path from $x$ to $n$ is existential in the
coloring. The lack of a path from $x$ to $n$ is universal in the
coloring. So deciding if ``$\tilde{P}_b$ and $\tilde{P}_r$ is a
{one-step path extension} of $P_b$ and $P_r$ and $x_b$ strongly
switches to RED'' is universal in the coloring and so computable in
$c'$.

As a result, $c'$ can be used as an oracle to implement both of the
above constructions. In the case where we can always strongly RED and
BLUE switch, we can then use $c'$ to compute both of the paths because
both paths are stabilized by strong switches, and $c'$ can recognize
the strong switches. In the case where we cannot strongly RED (BLUE)
switch, we can also use $c'$ to compute both of the paths because both
paths are already stable: no numbers ever switch from one path to the
other.

Note that Definition~\ref{state} is $\Pi_3$ in the coloring. Our
division of cases depends on the truth of this statement and the
witness to its failure. This is finite information but as a result the
proof is not uniform in $c'$.  In Theorem \ref{nonuniform}, we will
show that this nonuniformity cannot be removed.

The more naive construction, always greedily adding the next element
by a strong extension with no case-by-case breakdown, can be
implemented uniformly by $c'$, but the construction could potentially
result in infinitely many RED switches and finitely many BLUE switches
(or vice-versa). In this case, the RED path would be computable from
$c'$ because the strong RED switches would stabilize it. On the other
hand, the BLUE path would not necessarily be computable from $c'$: the
statement that an initial segment of the BLUE path has stabilized is
universal in the construction (``for all future steps of the
construction, none of these elements ever RED switch'') and so is
$\Pi_2$ in the coloring. Thus, the naive construction could
potentially produce a path decomposition in which one of the two paths
is computable from $c''$, but not $c'$.

In our proof of Theorem~\ref{bound}, if we can always switch then both
the BLUE and RED paths are infinite.  But if we cannot always switch
one of the paths might be finite.  In that case the constructed paths
are both computable from the coloring. In Theorem~\ref{hardiseasy}, we
will see by a more delicate case breakdown that this actually always
happens. Thus, although our strongly switching proof does not work for
all colorings, it does work for all ``difficult'' colorings: colorings
for which there is no computable solution.

\section{Uniformity}\label{uniform}

In the above section, we have provided a nonuniform $\Delta^0_2$
construction and a uniform $\Delta^0_3$ construction of a path
decomposition. Furthermore, Theorem~\ref{bound} showed that, in
general, we cannot hope for a construction that is simpler than
$\Delta^0_2$, so the complexity of the construction cannot be
reduced. Here we address the question of whether the nonuniformity of
the $\Delta^0_2$ construction can be reduced.

% Here we address two questions: What else do we get out of the
% nonuniform $\Delta^0_2$ construction? and the nonuniform
% $\Delta^0_2$ construction be improved?

Theorem~\ref{hardiseasy} shows that if our primary construction for
Theorem~\ref{bound} fails, then there must be a path decomposition for
$c$ in which one of the two paths is finite and the other is
computable from $c$.

Theorem~\ref{nonuniform} shows that there is no uniform $\Delta^0_2$
path decomposition.  Theorem~\ref{nonuniform2} improves this to show
we cannot even get by with a finite set of possible $\Delta^0_2$
indices for our path composition. Thus, nonuniformity is unavoidable.
% and moreover there can be no possible reduction in the degree of
% nonuniformity.

In light of this, the result in Theorem~\ref{hardiseasy} is the
closest possible thing to a reduction of nonuniformity: All of the
noncomputable cases are handled by a single uniform $\Delta^0_2$
construction, which cannot have its complexity reduced due to
Theorem~\ref{bound}, and all of the nonuniform cases (unavoidable, by
Theorem~\ref{nonuniform2}) are handled by computable constructions
that are as simple as possible, with one path finite and the other
computable.

We should clarify precisely what we mean by a $\Delta^0_2$ path
decomposition and an index for such an object.

\begin{definition}
  A $\Delta^0_2$ path decomposition is a pair $(P_b,P_r)$ of partial
  $0'$-computable functions for which the domain of each is an initial
  segment of $\mathbb{N}$, the ranges partition $\mathbb{N}$, and such
  that for every $n+1 \in \dom{P_b}$, $c\{P(n), P(n+1)\}$ is BLUE, and
  similarly for $P_r$.

  A $\Delta^0_2$ index for a decomposition is a pair of numbers
  $(i_b, i_r)$ with $\Phi_{i_b}^{\emptyset'} = P_b$ and
  $\Phi_{i_r}^{\emptyset'} = P_r$.  Equivalently, by the limit lemma,
  it is a pair of numbers $(j_b, j_r)$ such that $\phi_{j_b}$ and
  $\phi_{j_r}$ are total computable functions,
  $P_b(x) = \lim_s \phi_{j_b}(x,s)$ and
  $P_r(x) = \lim_s \phi_{j_r}(x,s)$ for all $x$ (where the limit does
  not converge when $x$ is not in the domain).
\end{definition}

% The primary goal of this section will be to prove
% Theorems~\ref{nonuniform} and~\ref{hardiseasy}, which state that
% there is no uniform proof of Theorem~\ref{bound}, and that if our
% primary construction for Theorem~\ref{bound} fails, then there must
% be a path decomposition for $c$ that is as simple as possible: one
% of the two paths is finite, and the other is computable from $c$.

\subsection{When we cannot always strongly BLUE and RED switch}

% The next theorem proves that this is unavoidable.

\begin{theorem}\label{hardiseasy}
  There is a computable function $f$ with the property that for any
  $e$, if $e$ is an index for a computable coloring $c$, then either
  $f(e)$ is an index for a $\Delta^0_2$ path decomposition for $c$, or
  there is a computable path decomposition for $c$ in which one of the
  two paths is finite.
\end{theorem}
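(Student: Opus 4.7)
The plan is to define $f$ as the computable function sending an index $e$ to a $\Delta^0_2$ index for the pair of partial functions produced by running the ``always strongly BLUE and RED switch'' construction from Case~1 of the proof of Theorem~\ref{bound}, uniformly relative to $\emptyset'$, applied to the coloring $c = \phi_e$. This makes $f$ total computable. When the coloring $c$ satisfies that we can always strongly BLUE and always strongly RED switch, the Case~1 analysis shows that the construction produces a valid path decomposition, so $f(e)$ is a correct $\Delta^0_2$ index.

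The remaining task is to show that whenever $c$ falls outside Case~1, there is a computable path decomposition of $c$ in which one of the two paths is finite. Without loss of generality, suppose we cannot always strongly RED switch, and choose a witness $(P_b^0, P_r^0)$ with $|P_r^0|$ minimal among all such witnesses. The claim I aim to prove is that $c$ has a computable path decomposition whose RED path is exactly $P_r^0$, with the BLUE path obtained computably by extending $P_b^0$.

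For this claim, we show that for every integer $x \notin P_b^0 \cup P_r^0$ there is a BLUE path extension of $(P_b^0, P_r^0)$ to $x$, and that such an extension can be found uniformly in $c$. The Case~2 analysis in the proof of Theorem~\ref{bound} already establishes that a BLUE or RED path extension to $x$ exists; the extra work is to rule out the RED alternative using minimality. If only a RED extension existed, Lemmas~\ref{cannot} and~\ref{order preserving}, together with a careful accounting of which elements of $P_r^0$ remain on the RED path after the extension, would yield a new witness $(\tilde{P}_b, \tilde{P}_r)$ with $|\tilde{P}_r| < |P_r^0|$, contradicting minimality. Once BLUE extensions are guaranteed to exist, computability of the construction follows because the existence of a BLUE path extension is $\Sigma^0_1$ in $c$: we search for a witnessing extension, follow it to place $x$ on the BLUE path, and iterate to the next integer.

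The main obstacle will be this combinatorial core, namely that minimality really does rule out any need to grow $P_r^0$. The direct version of the contradiction may not close immediately, and I expect the ``more delicate case breakdown'' alluded to at the start of the section to involve further sub-cases depending on whether intermediate switches in a hypothetical RED extension raise or lower the effective length of the RED path, as well as on the interaction of those switches with $P_b^0$; most of the bookkeeping will live here. The symmetric case in which we cannot always strongly BLUE switch is handled by swapping the roles of the two colors throughout.
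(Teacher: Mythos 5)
Your first half matches the paper exactly: $f(e)$ is the index of the uniform ``always strongly BLUE and RED switch'' construction, and the whole content lies in the case where we cannot always strongly RED (or BLUE) switch. But in that case your proposal asserts something the paper does not prove and explicitly guards against: that from the minimal witness $(P_{b,0},P_{r,0})$ the RED path can be frozen forever and a BLUE path extension to every $x$ always exists. The paper treats this only as its Case~1. Its Case~2 is precisely the situation where some BLUE extension $(P_{b,1},P_{r,0})$ admits an $n_0$ with \emph{no} BLUE path extension to $n_0$, and there the paper shows (Claims~\ref{claim1}--\ref{claim4}) that the situation reverses completely: there are no nontrivial BLUE extensions at all, while RED path extensions to every $n$ always exist. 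So the correct computable decomposition in Case~2 freezes the \emph{BLUE} path at $P_{b,1}$ and grows the RED path --- the opposite of what your construction outputs. Your construction, as written, would simply get stuck at $n_0$ with no instruction for how to proceed.

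The specific mechanism you propose for ruling this out does not work. You suggest that if only a RED extension to $x$ existed, one could extract a witness with a strictly shorter RED path, contradicting minimality. But a RED path extension of $(P_b, P_{r,0})$ to $x$ keeps $P_b$ fixed and only \emph{appends} to the RED path; no element of $P_{r,0}$ is ever removed by such an extension, so it can never produce a witness with $|\tilde{P}_r| < |P_{r,0}|$. Minimality of $|P_{r,0}|$ is only ever leveraged (both in the proof of Theorem~\ref{bound} and in Claims~\ref{claim3} and~\ref{claim4}) to forbid \emph{strong BLUE switches}, since those are the moves that genuinely shorten the RED path by one; it says nothing against lengthening the RED path. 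What actually closes the argument in the paper is the chain Claim~\ref{claim1} $\Rightarrow$ Claim~\ref{claim2} $\Rightarrow$ Claim~\ref{claim3} $\Rightarrow$ Claim~\ref{claim4}, which uses the edge between candidate endpoints $n_0,n_1,n_2$ together with Lemma~\ref{step} and Lemma~\ref{cannot} to show that BLUE and RED extensions cannot coexist and that it is the BLUE side that dies. You flag this yourself as the place ``most of the bookkeeping will live,'' but the delicate case breakdown you anticipate is not a refinement of your minimality contradiction --- it is a different conclusion (switch which color is finite), and without it the proof is incomplete.
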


Note that this theorem relativizes: there is a uniform way to take a
coloring $c$, and attempt to produce a $\Delta^c_2$ path decomposition
so that either the attempt succeeds, or there is a $c$-computable path
decomposition for $c$ in which one of the two paths is finite.

\begin{proof}
  The proof hinges on the case analysis from the proof of
  Theorem~\ref{bound}.

  In the case where we can always strongly BLUE and RED switch, the
  proof is uniform: we alternate between adding the next element,
  adding a RED switch, and adding a BLUE switch, and our $\Delta^c_2$
  path decomposition is simply the path decomposition stabilized by
  the switches. Our function $f$ will be the function corresponding to
  attempting to do that construction.

  It remains to show that if we cannot always RED switch, then there
  is always a $c$-computable path decomposition in which one of the
  two paths is finite. (The case where we cannot always BLUE switch
  will follow by symmetry.)

  As in the proof of Theorem~\ref{bound}, let $(P_{b,0}, P_{r,0})$
  witness that we cannot always RED switch, and furthermore assume
  that among all such witnesses the length of $P_{r,0}$ is
  minimal. Let $x_b$ and $x_r$ be the endpoints of the paths
  $P_{b,0}, P_{r,0}$. We split our proof into two cases.

  \medskip

  \noindent {\bf Case 1:} Assume that for every $n\in\mathbb{N}$, and
  for every BLUE extension $(P_{b}, P_{r,0})$ of $(P_{b,0}, P_{r,0})$,
  if $n$ is not on either $P_{b}$ or $P_{r,0}$, then there is a BLUE
  path extension of $(P_{b}, P_{r,0})$ to $n$.

  In this case, we will use $P_{r,0}$ as our RED path, and grow our
  BLUE path to cover the rest of $\mathbb{N}$. We use a basic greedy
  algorithm.

  At stage $s$, let $(P_{b,s}, P_{r,0})$ be the pair of paths that we
  have, and let $n_s$ be the smallest number not on either path. We
  search for a BLUE path extension of $(P_{b,s}, P_{r,0})$ to $n_s$,
  and when we find such an extension, we let $(P_{b,s+1}, P_{r,0})$ be
  the first such extension that we find.

  By hypothesis, we will always find such an extension, and this
  algorithm clearly covers all of $\mathbb{N}$ in the limit.

  \medskip

  \noindent {\bf Case 2:} Assume Case 1 does not hold. Let
  $(P_{b,1}, P_{r,0})$ be a BLUE path extension of
  $(P_{b,0}, P_{r,0})$, and $n_0\in\mathbb{N}$ so that there is no
  BLUE path extension of $(P_{b,1}, P_{r,0})$ to $n_0$.

  We claim that in this case, we actually have that for every
  $n\in\mathbb{N}$, and for every RED extension $(P_{b_1}, P_{r})$ of
  $(P_{b,1}, P_{r,0})$, if $n$ is not on either $P_{b_1}$ or $P_{r}$,
  then there is a RED path extension of $(P_{b_1}, P_{r})$ to
  $n$. Thus we may use the RED analogue of the previous algorithm. The
  proof of this claim will be somewhat circuitous.

  First we show that there is no $n$ such that $(P_{b,1}, P_{r,0})$
  has both a RED extension to $n$ and a BLUE extension to $n$. After
  this we will show that actually either there is no $n$ such that
  $(P_{b,1}, P_{r,0})$ has a BLUE extension to $n$ or there is no $n$
  such that $(P_{b,1}, P_{r,0})$ has a RED extension to $n$. We will
  then show that the first case is true. Finally, from there, we will
  show that for every $n\in\mathbb{N}$, and for every RED extension
  $(P_{b_1}, P_{r})$ of $(P_{b,1}, P_{r,0})$ there is a RED path
  extension of $(P_{b_1}, P_{r})$ to $n$.

  During these proofs, we will repeatedly use the following facts.
  \begin{enumerate}
  \item $(P_{b,1}, P_{r,0})$ is a witness to the fact that we cannot
    always RED switch.
  \item $P_{r,0}$ has minimal length among the RED paths of such
    witnesses.
  \item There is no BLUE path extension of $(P_{b,1}, P_{r,0})$ to
    $n_0$.
  \end{enumerate}
  The first two facts follow from the fact that $(P_{b,1}, P_{r,0})$
  is a BLUE path extension of $(P_{b,0}, P_{r,0})$. The third is from
  our hypothesis in Case 2.

  \begin{claim}\label{claim1}
    There is no $n$ such that $(P_{b,1}, P_{r,0})$ has both a RED
    extension to $n$ and a BLUE extension to $n$.
  \end{claim}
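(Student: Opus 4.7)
We argue by contradiction. Suppose $n$ admits both a BLUE extension $B = x_{b,1}, b_1, \ldots, b_k, n$ and a RED extension $R = x_{r,0}, r_1, \ldots, r_m, n$ of $(P_{b,1}, P_{r,0})$, chosen so that $|R|$ is minimal. The plan is to construct a strong path extension of $(P_{b,1}, P_{r,0})$ whose final step is a strong RED switch with target $n_0$; by Lemma~\ref{cannot} this will contradict the choice of $(P_{b,1}, P_{r,0})$ as a witness to the failure to always strongly RED switch.

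Two preliminary observations drive the argument. First, $c(\{n, n_0\}) = \text{RED}$: otherwise appending $n_0$ to $B$ yields a BLUE extension of $(P_{b,1}, P_{r,0})$ to $n_0$, contradicting Case~2. (Here one verifies that $n_0 \notin P_{b,1} \cup P_{r,0}$ and $n_0 \neq b_i, n$, since $b_1, \ldots, b_k, n$ all have BLUE extensions from $(P_{b,1}, P_{r,0})$ while $n_0$ does not.) Second, the minimality of $|R|$ implies both that $B$ and $R$ share no interior vertex --- a shared interior vertex would be a counterexample with a strictly shorter RED extension --- and that each $r_j$ has no BLUE extension from $(P_{b,1}, P_{r,0})$, for otherwise $r_j$ itself would be a counterexample with RED extension of length at most $j \leq m$.

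The core construction proceeds as follows. Starting from $(P_{b,1}, P_{r,0})$, perform the $m$ Case~2 steps appending $r_1, \ldots, r_m$ to the RED path, then the $k+1$ Case~1 steps appending $b_1, \ldots, b_k, n$ to the BLUE path. Vertex-disjointness (from the second observation) guarantees this is a legal strong path extension; denote the resulting state by $S$. From $S$ execute a Case~3 extension: remove $n$ from the BLUE path and append $n$ followed by $n_0$ to the RED path. Both new RED edges $\{r_m, n\}$ and $\{n, n_0\}$ are indeed RED, the former from the RED extension $R$ and the latter from the first observation. The switch is strong because any BLUE extension from $S$ to $n_0$ would, concatenated with $B$, produce a BLUE extension from $(P_{b,1}, P_{r,0})$ to $n_0$ (all intermediate vertices being fresh to $(P_{b,1}, P_{r,0})$), contradicting Case~2. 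Thus we have exhibited a strong RED switch inside a strong extension of $(P_{b,1}, P_{r,0})$, the contradiction we sought.

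The main obstacle is ensuring $n_0$ is fresh in $S$, i.e.\ that $n_0 \neq r_j$ for every $j$. If $n_0 = r_j$ lies on the RED extension, $n_0$ becomes unavailable after the Case~2 steps, and the construction must be modified. I expect the remedy to be performing only a partial RED extension up to $r_{j-1}$ and arguing on the color $c(\{r_{j-1}, n\})$: the value BLUE would yield a BLUE extension to $r_{j-1}$, contradicting the second preliminary observation, while the value RED enables the truncated version of the construction with $n_0$ still serving as a fresh target. The degenerate case $j = 1$ (where $r_{j-1} = x_{r,0}$ lies on $P_{r,0}$) requires further analysis exploiting the minimality of $|P_{r,0}|$ to exclude a strong BLUE switch of $x_{r,0}$. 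This layered case analysis is consistent with the authors' warning that the proof will be ``somewhat circuitous.''
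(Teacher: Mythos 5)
Your main construction is exactly the paper's proof of this claim: choose $n$ with both extensions, arrange (via minimality/replacement) that the two extensions meet only at $n$, split on the colour of $\{n,n_0\}$, get a BLUE extension to $n_0$ in the BLUE case, and in the RED case assemble both extensions into a strong path extension of $(P_{b,1},P_{r,0})$ and strongly switch $n$ to RED with $n_0$ appended after it, contradicting the fact that $(P_{b,1},P_{r,0})$ witnesses the failure to always strongly RED switch. Your two preliminary observations and the strongness verification all match the published argument.

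The freshness issue you raise --- that $n_0$ might be one of the $r_j$ --- is real; the paper's proof passes over it in silence, and if $n_0$ already sits on $\tilde{P}_r$ the final Case~(3) step is illegal as written. But your proof does not close this case either, and it is not vacuous. Note first that your minimality of $|R|$ collapses it considerably: if $n_0=r_j$ with $j<m$, then $x_{r,0},r_1,\dots,r_j,n$ is a RED extension to $n$ (using the RED edge $\{n_0,n\}$ from your first observation) with fewer new vertices, so $j=m$ is forced; next, $c(\{r_{m-1},n\})=\text{RED}$ would again shorten $R$, so that edge is BLUE, and for $m\ge 2$ this makes $r_{m-1}$ a counterexample with a shorter RED extension. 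Hence only $m=1$ survives, i.e.\ $R=x_{r,0},n_0,n$ with $c(\{x_{r,0},n\})$ BLUE --- precisely the degenerate case you defer. In particular the ``RED'' branch of your proposed remedy never occurs, and the branch that does occur is the one you leave open. Closing it is genuinely more work than ``excluding a strong BLUE switch of $x_{r,0}$'': to contradict the minimality of $|P_{r,0}|$ you must actually \emph{produce} a strong BLUE switch, which requires exhibiting a fresh successor vertex $x$ with no RED path extension to $x$, and ruling out the alternative forces a further analysis (e.g.\ showing that a failed strong RED switch from the position with RED path $P_{r,0},n_0$ makes every fresh vertex other than $n_0$ BLUE-extendable, hence BLUE-adjacent to $x_{r,0}$ by minimality of $|R|$, before one can argue about the candidate $x$). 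As submitted, then, the proof is incomplete --- though the missing case is one the paper's own proof also does not address.
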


\begin{proof}
  Assume not, and let $n_1$ be such an $n$. Replacing $n_1$ if
  necessary, we may assume that the BLUE path extension to $n_1$ and
  the RED path extension to $n_1$ do not intersect before
  $n_1$. Consider the edge from $n_1$ to $n_0$.

  If this edge is BLUE, then we can add $n_0$ to the end of the BLUE
  extension to $n_1$, creating a BLUE path to $n_0$. This contradicts
  fact (3).

  If the edge is RED, then consider the path extension
  $(\tilde{P}_b,\tilde{P}_r)$ of $(P_{b,1}, P_{r,0})$ in which
  $\tilde{P}_b$ is created by the BLUE path extension to $n_1$, and
  $\tilde{P}_r$ is created by the RED path extension to $n_1$, but
  with $n_1$ removed from the end. Let $x$ be the last element of
  $\tilde{P}_r$. Now we have that the edge from $x$ to $n_1$ is RED,
  the edge from $n_1$ to $n_0$ is RED, and there is no BLUE path
  extension from $(\tilde{P}_b,\tilde{P}_r)$ to $n_0$. So we can
  strongly switch $n_1$ to RED. This contradicts fact (1).
\end{proof}

A path extension $(\tilde{P}_b,\tilde{P}_r)$ of $({P}_b,{P}_r)$ is
\emph{nontrivial} if $(\tilde{P}_b,\tilde{P}_r)\neq({P}_b,{P}_r)$.

\begin{claim}\label{claim2}
  Either there is no $n$ such that $(P_{b,1}, P_{r,0})$ has a
  nontrivial BLUE extension to $n$ or there is no $n$ such that
  $(P_{b,1}, P_{r,0})$ has a nontrivial RED extension to $n$.
\end{claim}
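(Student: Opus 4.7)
The natural approach is proof by contradiction: assume there exist integers $n_B$ and $n_R$ such that $(P_{b,1},P_{r,0})$ admits a nontrivial BLUE extension $(\tilde{P}_b, P_{r,0})$ to $n_B$ and a nontrivial RED extension $(P_{b,1}, \tilde{P}_r)$ to $n_R$. By Claim~\ref{claim1}, $n_B \neq n_R$, and moreover $n_B$ has no RED extension while $n_R$ has no BLUE extension. Nontriviality guarantees that neither $n_B$ nor $n_R$ lies on $P_{b,1}$ or $P_{r,0}$.

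The core move is to inspect the color of the edge $\{n_B, n_R\}$. In the generic sub-case where $n_R \notin \tilde{P}_b$ and $n_B \notin \tilde{P}_r$, if $c(\{n_B, n_R\}) = \text{BLUE}$ then appending $n_R$ to $\tilde{P}_b$ by a single case-(1) one-step extension is legal (since $n_R$ is on neither $\tilde{P}_b$ nor $P_{r,0}$) and produces a BLUE extension of $(P_{b,1}, P_{r,0})$ to $n_R$, contradicting Claim~\ref{claim1} because $n_R$ also admits a RED extension. If $c(\{n_B, n_R\}) = \text{RED}$, the symmetric move appends $n_B$ to $\tilde{P}_r$ to obtain a RED extension to $n_B$, again contradicting Claim~\ref{claim1}.

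The remaining sub-case, and what I expect to be the main obstacle, is when $n_R \in \tilde{P}_b$ (or symmetrically $n_B \in \tilde{P}_r$). One would like to truncate the one-step-extension sequence producing $\tilde{P}_b$ at the first step at which $n_R$ joins the BLUE path, exhibiting a BLUE extension of $(P_{b,1},P_{r,0})$ to $n_R$ and contradicting Claim~\ref{claim1}. The subtlety is that a BLUE extension is permitted to alter the RED path via switches provided it returns to $P_{r,0}$ by the end, so at the moment $n_R$ first enters the BLUE path the intermediate RED path may differ from $P_{r,0}$, and the truncated state is merely a path extension rather than a BLUE extension. My plan for this sub-case is to replace $(\tilde{P}_b, P_{r,0})$ with a BLUE extension to $n_B$ that is minimal in total number of one-step extensions, and then use Lemma~\ref{order preserving} together with the minimality of $|P_{r,0}|$ among witnesses to the failure of always-strongly-RED-switch to rule out any canceling block of RED-modifying switches preceding the first appearance of $n_R$: any such block could be excised to produce a shorter BLUE extension to $n_B$, contradicting minimality. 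This should force $n_R$ first to join BLUE while the intermediate RED path is still $P_{r,0}$, reducing the sub-case to the generic one handled above and yielding the desired contradiction.
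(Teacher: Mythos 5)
Your ``generic sub-case'' is, verbatim, the paper's entire proof of Claim~\ref{claim2}: given a nontrivial BLUE extension to $n_1$ and a nontrivial RED extension to $n_2$, inspect the color of $\{n_1,n_2\}$ and append one endpoint to the other endpoint's extension, producing a point with both a BLUE and a RED extension and contradicting Claim~\ref{claim1}. So the core of your argument is exactly the intended one, and it is correct.

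The sub-case you isolate (the appended point already lies on the extension you want to append it to) is a real subtlety that the paper simply does not address---it tacitly assumes the append is legal. However, your proposed resolution is a plan rather than a proof, and its key step is unjustified: you assert that a ``canceling block'' of switches occurring before $n_R$ first joins the BLUE path ``could be excised to produce a shorter BLUE extension to $n_B$.'' Deleting a block of one-step extensions from the middle of a sequence need not leave a valid sequence of one-step extensions: the elements that temporarily visited the RED path during that block end up on the final BLUE path, so excising the block changes $\tilde{P}_b$ itself, and the edge from the new predecessor to the first element appended after the block has no reason to be BLUE. So the truncated/excised object need not be a BLUE path extension to $n_B$ at all, and minimality is not contradicted. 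As it stands, that sub-case remains open in your write-up (as it does, implicitly, in the paper's); if you want to close it you need either an argument that a BLUE extension to $n_B$ can always be taken to consist of pure appends to the BLUE path, or a direct argument that $n_R \in \tilde{P}_b$ is impossible using facts (1)--(3), rather than the excision heuristic.
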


\begin{proof}
  Assume not, and let $n_1,n_2$ be such that $(P_{b,1}, P_{r,0})$ has
  a nontrivial BLUE extension to $n_1$ and a nontrivial RED extension
  to $n_2$. Consider the edge between $n_1$ and $n_2$. If the edge is
  RED, then we can use it with the RED path to $n_2$ to create a RED
  path to $n_1$. But then $(P_{b,1}, P_{r,0})$ has both a RED
  extension to $n_1$ and a BLUE extension to $n_1$, contradicting
  Claim 1. If the edge is BLUE, we can similarly conclude that
  $(P_{b,1}, P_{r,0})$ has both a RED and a BLUE extension to $n_2$,
  again contradicting Claim~\ref{claim1}.
\end{proof}

\begin{claim}\label{claim3}
  There is no $n$ such that $(P_{b,1}, P_{r,0})$ has a nontrivial BLUE
  extension to $n$.
\end{claim}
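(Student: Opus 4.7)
The plan is to assume toward contradiction that there is some $n_1$ such that $(P_{b,1}, P_{r,0})$ has a nontrivial BLUE extension to $n_1$. Applying Claim~\ref{claim2} to this hypothesis, there is then no $n$ with a nontrivial RED extension of $(P_{b,1}, P_{r,0})$. I will derive the contradiction entirely from the integer $n_0$ given in Case 2, without needing $n_1$ again beyond this Claim~\ref{claim2} application. The idea is that the combined constraints ``no BLUE extension to $n_0$'' (Case 2 hypothesis) and ``no nontrivial RED extension to $n_0$'' (just deduced) severely restrict the colors of the three edges connecting $x_b$, $x_r$, and $n_0$, and the remaining freedom will collide with either the minimality of $P_{r,0}$ or with the fact that $(P_{b,1}, P_{r,0})$ witnesses that we cannot always strongly RED switch.

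The first step is to pin down two edge colors. Since appending $n_0$ to $P_{r,0}$ would be a one-step nontrivial RED extension to $n_0$, we must have $c(\{x_r, n_0\}) = \text{BLUE}$. Similarly, since appending $n_0$ to $P_{b,1}$ would be a BLUE extension to $n_0$, we must have $c(\{x_b, n_0\}) = \text{RED}$. The remaining edge $\{x_b, x_r\}$ can be either color, and this gives the case split.

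In the case $c(\{x_b, x_r\}) = \text{BLUE}$: form $(\tilde P_b, \tilde P_r)$ by switching $x_r$ to BLUE and then appending $n_0$, i.e.\ remove $x_r$ from the end of $P_{r,0}$ and append $x_r, n_0$ to the end of $P_{b,1}$ (Case~(4) of the one-step path extension definition). Both required edges are BLUE. This switch is strong because there is no RED path extension of $(P_{b,1}, P_{r,0})$ to $n_0$, so $(\tilde P_b, \tilde P_r)$ is a strong one-step path extension of $(P_{b,1}, P_{r,0})$. By Lemma~\ref{cannot}, $(\tilde P_b, \tilde P_r)$ also witnesses that we cannot always strongly RED switch, but $\tilde P_r$ is strictly shorter than $P_{r,0}$, contradicting the minimality of the length of $P_{r,0}$ among such witnesses. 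In the case $c(\{x_b, x_r\}) = \text{RED}$: form $(\tilde P_b, \tilde P_r)$ by switching $x_b$ to RED and appending $n_0$ (Case~(3)). Both required edges are RED, and the switch is strong because there is no BLUE path extension of $(P_{b,1}, P_{r,0})$ to $n_0$ (Case 2 hypothesis). Thus $(\tilde P_b, \tilde P_r)$ is a strong path extension of $(P_{b,1}, P_{r,0})$ containing a strong RED switch, directly contradicting the fact that $(P_{b,1}, P_{r,0})$ witnesses that we cannot always strongly RED switch.

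The main obstacle is purely bookkeeping: one must verify carefully that each constructed one-step extension is actually strong in the technical sense of Definition~\ref{state} (i.e.\ that the absence of the ``opposite-color'' path extension to the added element really does follow from what we have). Once the two edge colors involving $n_0$ are fixed, the case split on $c(\{x_b, x_r\})$ is exhaustive and each branch produces an immediate contradiction, so no further combinatorial work is required.
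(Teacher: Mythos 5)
Your proof is correct and follows essentially the same route as the paper: after invoking Claim~\ref{claim2}, both arguments observe that $n_0$ admits neither a BLUE nor a RED extension, so it must be added by a switch that is necessarily strong, and then rule out a strong RED switch via the witness property and a strong BLUE switch via the minimality of $|P_{r,0}|$ (using Lemma~\ref{cannot}). The only difference is presentational—you unpack Lemma~\ref{step} into an explicit case split on $c(\{x_b,x_r\})$ rather than citing it—and your implicit assumption that $P_{b,1}$ and $P_{r,0}$ are nonempty is harmless, since an empty path would immediately yield the forbidden extension to $n_0$.
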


\begin{proof}
  Again, assume not. Then by Claim~\ref{claim2}, there is no $n$ such
  that $(P_{b,1}, P_{r,0})$ has a nontrivial RED extension to $n$. In
  particular, $(P_{b,1}, P_{r,0})$ does not have a RED extension to
  $n_0$. By fact (3), $(P_{b,1}, P_{r,0})$ also does not have a BLUE
  extension to $n_0$. It follows, by Lemma~\ref{step}, that we can add
  $n_0$ by a switch, and the switch must be a strong switch.

  The switch cannot be a strong RED switch by fact (1). Also, the
  switch cannot be a strong BLUE switch because if we performed a
  strong BLUE switch on $(P_{b,1}, P_{r,0})$, it would create a strong
  path extension in which we decreased the length of the RED
  path. This contradicts fact (2), recalling that by Lemma
  \ref{cannot}, any strong path extension of $(P_{b,1}, P_{r,0})$ must
  also witness that we cannot always RED switch.
\end{proof}

\begin{claim}\label{claim4}
  For every $n\in\mathbb{N}$, and for every RED extension
  $(P_{b,1}, P_{r})$ of $(P_{b,1}, P_{r,0})$, if $n$ is not on either
  $P_{b,1}$ or $P_{r}$, then there is a RED path extension of
  $(P_{b,1}, P_{r})$ to $n$.
\end{claim}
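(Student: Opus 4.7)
The plan is to prove Claim~\ref{claim4} by a direct case analysis: I apply Lemma~\ref{step} to $(P_{b,1},P_r)$ and $n$ to produce a one-step path extension containing $n$, and then argue that every case either yields a RED path extension to $n$ outright or contradicts Claim~\ref{claim3} or one of facts~(1)--(3) listed just before Claim~\ref{claim1}. Write $y_b$ for the last element of $P_{b,1}$ and $x_r$ for the last element of $P_r$. A useful preliminary observation is that, since $\tilde{P}_b = P_{b,1}$ is unchanged, the path extension from $(P_{b,1}, P_{r,0})$ to $(P_{b,1},P_r)$ uses only Case~(2) one-step extensions, which are automatically strong; in particular $(P_{b,1},P_r)$ is a strong path extension of $(P_{b,1},P_{r,0})$.

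If Lemma~\ref{step} returns a Case~(2) extension, then $\tilde{P}_r$ is precisely a RED path extension to $n$ and we are done. In Case~(1), $c(y_b,n)=\textup{BLUE}$, so appending $n$ to $P_{b,1}$ gives a nontrivial BLUE path extension of $(P_{b,1},P_{r,0})$ to $n$ (using $n\notin P_{b,1}\cup P_r\supseteq P_{b,1}\cup P_{r,0}$), contradicting Claim~\ref{claim3}. In Case~(3), $y_b$ switches to RED and $n$ is appended; if the switch is strong, prepending the Case~(2) steps that built $(P_{b,1},P_r)$ from $(P_{b,1},P_{r,0})$ exhibits a strong path extension of $(P_{b,1},P_{r,0})$ containing a strong RED switch, contradicting fact~(1); if the switch is not strong, a BLUE path extension of $(P_{b,1},P_r)$ to $n$ exists, and since its intermediate vertices lie outside $P_{b,1}\cup P_r$ they also lie outside $P_{b,1}\cup P_{r,0}$, so the same sequence of Case~(1) steps is a nontrivial BLUE path extension of $(P_{b,1},P_{r,0})$ to $n$, again contradicting Claim~\ref{claim3}.

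Case~(4) is the most delicate. Here $x_r$ switches to BLUE, $n$ is appended, and necessarily $c(y_b,x_r)=\textup{BLUE}$. If $P_r\supsetneq P_{r,0}$, then $x_r$ is one of the vertices added in building $P_r$ from $P_{r,0}$, so $x_r\notin P_{b,1}\cup P_{r,0}$; appending $x_r$ to $P_{b,1}$ then yields a nontrivial BLUE path extension of $(P_{b,1},P_{r,0})$ to $x_r$, contradicting Claim~\ref{claim3}. If instead $P_r=P_{r,0}$, then a strong switch would, by Lemma~\ref{cannot}, produce a witness to the failure of always strongly RED switching whose RED path has length $|P_{r,0}|-1$, contradicting the minimality in fact~(2); whereas a non-strong switch is by definition the statement that a RED path extension of $(P_{b,1},P_{r,0})=(P_{b,1},P_r)$ to $n$ exists, which is exactly the desired conclusion. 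The main obstacle I anticipate is Case~(4): unlike the other cases, it is the displaced endpoint $x_r$ rather than the new vertex $n$ that drives the contradiction in the generic sub-case, and the base sub-case $P_r=P_{r,0}$ must be handled via minimality of $|P_{r,0}|$ rather than via Claim~\ref{claim3}.
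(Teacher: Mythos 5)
Your proof is correct and follows essentially the same route as the paper's: both rest on Lemma~\ref{step}, Claim~\ref{claim3}, fact~(1) via Lemma~\ref{cannot}, and the minimality in fact~(2), with the same key split on whether $P_r = P_{r,0}$. The only difference is organizational---you run a forward case analysis on the four types of one-step extension, whereas the paper assumes no RED extension to $n$ exists, deduces from Claim~\ref{claim3} that any switch must be strong, and then rules the strong switches out---so your treatment of the non-strong sub-cases just makes explicit what the paper dispatches implicitly.
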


\begin{proof}
  Let $(P_{b,1}, P_{r})$ be a RED extension of $(P_{b,1}, P_{r,0})$,
  and let $n_1$ be an element of $\mathbb{N}$ that is on neither
  $P_{b,1}$ nor $P_{r}$. Assume there is no RED extension of
  $(P_{b,1}, P_{r})$ to $n$. By Claim~\ref{claim3}, there is also no
  BLUE extension of $(P_{b,1}, P_{r})$ to $n$, because extending the
  RED path cannot make it any easier to find a BLUE extension. Then,
  by Lemma~\ref{step}, to add $n_1$ to $(P_{b,1}, P_{r})$, we must do
  a switch, and the switch must be strong.

  We split our proof into two cases:

  If $(P_{b,1}, P_{r})=(P_{b,1}, P_{r,0})$, then the proof proceeds
  exactly as the proof of Claim~\ref{claim3}. The switch cannot be a
  strong RED switch by fact (1), and the switch cannot be strong a
  BLUE switch by fact (2).

  If $(P_{b,1}, P_{r})\neq(P_{b,1}, P_{r,0})$, then the switch still
  cannot be a strong RED switch by fact (1), recalling again that by
  Lemma \ref{cannot}, any strong path extension of
  $(P_{b,1}, P_{r,0})$ must also witness that we cannot always RED
  switch. The switch also cannot be a BLUE switch, because for the
  switch to be a BLUE switch, the edge from the end of the $P_{b,1}$
  to the end of $P_{r}$ must be BLUE, contradicting Claim
  \ref{claim3}.
\end{proof}

This completes the proof of Theorem \ref{hardiseasy}.
\end{proof}

Note that if we are in Case 2 of the above construction, we have no
way of knowing whether we will find the $x$ we are looking for, so we
cannot know when to switch to the construction for Case 1. This does
not concern us. We are only proving that there is a computable path
decomposition with one path finite, not that the path decomposition
can be found uniformly.

\subsection{No Uniform $\Delta^0_2$ index}

\begin{theorem}\label{nonuniform}
  There is no partial computable function $f$ such that if $e$ is an
  index for a computable coloring $c$, then $f(e)$ is an index for a
  $\Delta^0_2$ path decomposition for $c$.
   \end{theorem}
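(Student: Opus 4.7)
The plan is to diagonalize against $f$ using the recursion theorem with parameters. Suppose such an $f$ exists. Then one can build a computable coloring $c$ with an index $e$ known during the construction, allowing access to $(i_b, i_r) = f(e)$ and (via the limit lemma) the total computable approximations $\phi_{j_b}(\cdot, s)$ and $\phi_{j_r}(\cdot, s)$ to $P_b := \Phi_{i_b}^{\emptyset'}$ and $P_r := \Phi_{i_r}^{\emptyset'}$. The goal is to construct $c$ so that $(P_b, P_r)$ fails to be a valid path decomposition of $c$, contradicting the hypothesis that $f$ works on $e$.

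The construction of $c$ proceeds in stages, guided by a sabotage rule based on the approximations. At each stage $s$, if a pair $\{a, b\}$ is still uncommitted and currently appears as a consecutive pair on the stage-$s$ BLUE approximation (i.e.\ $\{a,b\} = \{\phi_{j_b}(n, s), \phi_{j_b}(n+1, s)\}$ for some $n$), commit $c(\{a, b\})$ to RED; symmetrically, if it appears as consecutive on $\phi_{j_r}(\cdot, s)$, commit it to BLUE; otherwise defer and eventually commit via a default schedule so that $c$ is total. Since $(P_b, P_r)$ must partition $\mathbb{N}$, at least one of the two limit paths is infinite; without loss of generality assume $P_b$ is.

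The heart of the argument is then to show that some consecutive pair $\{P_b(n), P_b(n+1)\}$ of the true limit $P_b$ is eventually committed to RED by the sabotage rule, contradicting the requirement that $c(\{P_b(n), P_b(n+1)\})$ be BLUE. By the limit lemma, for each $n$ the approximation $\phi_{j_b}(n, s)$ stabilizes at some stage, so past a sufficient stage the pair $\{P_b(n), P_b(n+1)\}$ is persistently consecutive in $\phi_{j_b}(\cdot, s)$ and eligible to trigger the sabotage rule. Because $P_b$ is infinite it supplies infinitely many candidate consecutive pairs, and one must eventually be committed by the BLUE sabotage rule rather than being preempted by the symmetric RED rule or by default. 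The symmetric analysis works if $P_r$ is the infinite path.

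The main obstacle is exactly this scheduling: a naive timetable (e.g., committing every pair at stage $\max(a, b) + 1$) can fail, because the approximation's stabilization stages may grow faster than any fixed computable bound in $\max(a, b)$, so an adaptive scheme is needed. One must combine this with an argument that infinitely many witnesses among the consecutive pairs of $P_b$ guarantee at least one is caught by the sabotage rule before being preempted. Arranging the priorities so that this holds, while still producing a total computable coloring, is the technical core of the proof.
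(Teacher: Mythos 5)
There is a genuine gap, and it is exactly the one you flag at the end and then defer as ``the technical core.'' Your sabotage rule only acts on a pair $\{a,b\}$ once it appears as a consecutive pair in the current approximation, but for $c$ to be total computable every pair must be committed by a stage that the construction can reach effectively -- in practice, by some computable schedule $g(a,b)$. The stage at which $\phi_{j_b}(n,\cdot)$ and $\phi_{j_b}(n+1,\cdot)$ settle on the true values $P_b(n),P_b(n+1)$ is only $\emptyset'$-computable and need not be bounded by any computable function of those values; so it is entirely consistent that \emph{every} true consecutive pair of $P_b$ is already committed (by the default schedule, or by the symmetric RED rule acting on a transient configuration) before it ever becomes eligible for BLUE sabotage. ``Infinitely many candidate pairs'' does not rescue this: the same timing obstruction applies uniformly to all of them, so no pigeonhole argument forces even one to be caught. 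Since the entire contradiction rests on catching at least one edge, the proof as written does not go through, and I do not see how to repair it while keeping the edge-targeting design: any adaptive delay of the default schedule that waits for the approximation to reveal an edge threatens the computability of $c$.

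The paper's proof avoids this by never trying to poison specific edges whose identity is revealed $\Delta^0_2$-late. Instead, at stage $s$ it only colors pairs $\{t,s+1\}$ involving the fresh number $s+1$, and it builds \emph{interval barriers}: a strategy for $P_b$ tracks only the current value of $P_{b,s}(0)$ and how long it has been stable, and colors all of $[0,s_0]$ RED against every larger number, so that once $P_b(0)$ (which is $\le s_0$) stabilizes, the BLUE path is trapped in $[0,s_0]$ and is finite. A second, lower-priority strategy then colors a further interval $(s_0,s_1]$ BLUE against everything larger to trap the tail of $P_r$, so both paths are finite and cannot partition $\mathbb{N}$. The information the construction needs (the current first value of each approximated path and its stability time) is available computably stage by stage, which is precisely what your scheme lacks. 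You would need to replace your edge-by-edge rule with some such barrier mechanism for the argument to close.
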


This proof is somewhat more complicated than necessary, because it is
intended to serve as an introduction to the proof of
Theorem~\ref{nonuniform2}, and so we are performing a simplified
version of the construction found in that proof.

\begin{proof}\footnote{We want to thank
    an anonymous reader for pointing out an error in an early version of
    this proof.}
  Let $f$ be a partial computable function. We create a computable
  coloring with index $e$ such that if $f(e)$ is defined, then $f(e)$
  is not an index for a $\Delta^0_2$ path decomposition for $c$.

  Our construction is in stages. During stage $s$ of the construction,
  for every $t\leq s$, we color the pair $\{t,s+1\}$.

  By the recursion theorem, we may use an index, $e$, for the coloring
  that we are constructing.  We begin computing $f(e)$, and while we
  wait for it to halt, we color everything BLUE with everything else.

  If $f(e)$ halts, then its value provides us a pair of Turing
  reductions $\Phi_{i_b}$ and $\Phi_{i_r}$, and by the limit lemma we
  may obtain a pair of total computable functions $\phi_b$ and
  $\phi_r$ with $\lim_s \phi_b(x,s) = \Phi_{i_b}^{\emptyset'}(x)$ for
  all $x$, and similarly for $\phi_r$.  We define
  $P_b = \Phi_{i_b}^{\emptyset'}$, $P_{b,s}(x) = \phi_b(x,s)$, and
  similarly for $P_r$ and $P_{r,s}$.  We will assume that
  $P_{b,s}(x) \le s$ and $P_{r,s}(x) \le s$ for every $x$ and $s$.

  We will have two strategies, $S_b$ and $S_r$, which work to ensure
  that if $(P_b,P_r)$ forms a path decomposition for $c$, then $P_b$
  (respectively $P_r$) is finite.  Note that if both $S_b$ and $S_r$
  achieve their goals, then $(P_b, P_r)$ cannot form a path
  decomposition.
  % If $f(e)$ halts, then by the limit lemma, we begin approximating
  % the
  % $\Delta^0_2$ paths $P_r$ and $P_b$ computed by the $f(e)$th
  % $\Delta^0_2$ index. We let $P_{r,s}$ and $P_{b,s}$ be the stage
  % $s$
  % approximations to $P_r$ and $P_b$.
  
  One of $S_b$ and $S_r$ will have high priority, and the other will
  have low priority, but this priority assignment will potentially
  change over the course of the construction.  High priority goes to
  the strategy whose corresponding first element has been stable the longest.
  To make this precise, at stage $s$, define $t(b,s)$ to be least such
  that for every $t \in [t(b,s), s]$, $P_{b,t}(0) = P_{b,s}(0)$, and
  make the similar definition for $t(r,s)$.  Then $S_b$ has high
  priority at stage $s$ if $t(b,s) \le t(r,s)$, and otherwise $S_r$
  has high priority.
  
  Suppose first that $S_b$ has high priority at stage $s$.  Let $s_0$
  be least such that $s_0 \ge t(b,s)$ and $S_b$ has high priority at
  stage $s_0$.  Note that $S_b$ necessarily had high priority at every
  stage between $s_0$ and $s$.  For every $t \le s_0$, we color the
  pair $\{t, s+1\}$ RED.  This completes the action for $S_b$.
  
  We now describe the behavior of $S_r$ when it is of lower priority
  at stage $s$.  We consider whether there are $k,\ell \le s$ with
  \[
    \text{range}(P_{b,s}\!\!\upharpoonright_\ell) \sqcup
    \text{range}(P_{r,s}\!\!\upharpoonright_k) \supseteq [0,s_0]
  \] and $P_{r,s}(k) > s_0$.  If there are no such $k$ and $\ell$, we
  take no action for $S_r$ at stage $s$.  If there are such a $k$ and
  $\ell$, we fix the least such.  Let $s_1$ be least such that

  $s_1 \ge P_{r,s}(k)$,
  $P_{b,t}\!\!\upharpoonright_\ell = P_{b,s}\!\!\upharpoonright_\ell$
  and
  $P_{r,t}\!\!\upharpoonright_{k+1} =
  P_{r,s}\!\!\upharpoonright_{k+1}$ for every $t \in [s_1, s]$.  For
  every $t \in (s_0, s_1]$, we color the pair $\{t,s+1\}$ BLUE.  This
  completes the action for $S_r$.  This a place where our construction
  is more complicated than necessary as it would be enough to color
  BLUE all relevant pairs not yet colored RED.
 
  If instead $S_r$ has high priority at stage $s$, we proceed as
  above, mutatis mutandis.
 
  At the end of stage $s$, for any $t\le s$ for which we have not yet
  colored $\{t,s+1\}$, we assign a color arbitrarily (for
  definiteness, BLUE).  This completes the construction.  We now
  verify that if $f(e)$ converges, it does not specify a path
  decomposition for $c$.
 
  Assume, towards a contradiction, that $(P_b, P_r)$ form a path
  decomposition for $c$.  Then at least one of $P_b$ and $P_r$ is
  nonempty, and for this path, its value at 0 (the first element of
  the path) will eventually converge.  So eventually one of the
  strategies will have high priority for cofinitely many stages with
  an unchanging least $s_0$.  Without loss of generality, assume this
  is $S_b$.  Then $P_b(0) \le s_0$, and by construction $c\{x,y\}$ is
  RED for every $x \le s_0 < y$.  So $P_b$ cannot contain any elements
  larger than $s_0$, and so $S_b$ has ensured its requirement by
  making $P_b$ finite.
 
  Since $(P_b, P_r)$ form a path decomposition, in particular their
  ranges cover $\mathbb{N}$.  So there are some least $k$ and $\ell$ with
  \[
    \text{range}(P_{b}\!\!\upharpoonright_\ell) \sqcup
    \text{range}(P_{r}\!\!\upharpoonright_k) \supseteq [0,s_0]
  \]
  and $P_r(k) > s_0$.  Let $s_1 \le P_r(k)$ be least such that
  $P_b\!\!\upharpoonright_\ell$ and $P_r\!\!\upharpoonright_{k+1}$ have
  converged by stage $s_1$.  Then $S_r$ will eventually select this
  $k,\ell$ and $s_1$ for cofinitely many stages.  By construction,
  $c\{x,y\}$ is BLUE for every $s_0 < x \le s_1 < y$, and
  $P_r(k) \in (s_0, s_1]$.  $P_r$ cannot contain any elements of
  $[0, s_0]$ after $P_r(k)$, since those elements have all either
  occurred on $P_b$ or on $P_r$ before $P_r(k)$.  Thus $P_r$ after
  $P_r(k)$ is entirely contained in $(s_0, s_1]$, and so $S_r$ has
  ensured its requirement of making $P_r$ finite.\end{proof}

We remark now that the technique of Theorem~\ref{nonuniform} only
allows us to build a coloring $c$ that can defeat any single uniform
manner of attempting to produce a $\Delta^0_2$ path decomposition from
$c$. For the coloring $c$ that we create, it is not at all difficult
to produce a path decomposition; it is just the case that the $f(e)$th
$\Delta^0_2$ path decomposition fails to do so.

% We have shown in Theorems~\ref{halting} and~\ref{nonuniform} that
% there are colorings for which any path decomposition computes the
% halting set, and that there is no uniform $\Delta^0_2$ path
% decomposition, so the simplest possible path decomposition that one
% could hope for would be a nonuniform $\Delta^0_2$ construction, or a
% uniform $\Delta^0_3$ construction, both of which we have
% provided. Here, we did not remove any nonuniformity from our
% $\Delta^0_2$ construction, but we did show that our primary
% $\Delta^0_2$ construction uniformly succeeds in all but the most
% degenerate cases: cases in which there is a path decomposition that
% is as simple as possible, with one path finite and the other
% computable.

\subsection{No finite set of $\Delta^0_2$ indices}

We show now, by a strengthening of the argument from
Theorem~\ref{nonuniform} that it is impossible to reduce the
nonuniformity to a finite collection of $\Delta^0_2$ indices.

\begin{theorem}\label{nonuniform2}
  There is no partial computable function $f$ such that if $e$ is an
  index for a computable coloring $c$, then $f(e)$ is an index for a
  finite c.e. set $W_{f(e)}$ one of whose elements is an index for a
  $\Delta^0_2$ path decomposition for $c$.
\end{theorem}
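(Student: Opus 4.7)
The strategy extends the priority construction of Theorem~\ref{nonuniform} by running one pair of strategies per element of $W_{f(e)}$ in a finite-injury priority argument. By the recursion theorem, fix an index $e$ for the coloring $c$ under construction and compute $f(e)$; if $f(e)$ diverges, color everything BLUE. Otherwise, enumerate $W_{f(e)}$. For each index $\sigma$ enumerated, view $\sigma$ via the limit lemma as a $\Delta^0_2$ candidate $(P_b^\sigma, P_r^\sigma)$ with computable approximations $(P_{b,s}^\sigma, P_{r,s}^\sigma)$, and install two sub-strategies $S_b^\sigma, S_r^\sigma$ whose individual goals are as in Theorem~\ref{nonuniform}: force $P_b^\sigma$ (respectively $P_r^\sigma$) finite. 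Since $W_{f(e)}$ is a finite c.e.\ set, only finitely many candidate pairs are ever installed; if for every installed $\sigma$ both sub-strategies succeed, then no $\sigma \in W_{f(e)}$ can encode a path decomposition for $c$, because the union of two finite paths cannot cover $\mathbb{N}$.

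Priorities are ordered by enumeration into $W_{f(e)}$: earlier candidates have higher priority, and within a single $\sigma$ the primary/secondary assignment between $S_b^\sigma$ and $S_r^\sigma$ is determined dynamically from the comparative stable-period lengths $t(b,\sigma,s), t(r,\sigma,s)$ of $P_{b,s}^\sigma(0), P_{r,s}^\sigma(0)$, as in Theorem~\ref{nonuniform}. In priority order, define nested thresholds $s_0^{\sigma_0} < s_1^{\sigma_0} < s_0^{\sigma_1} < s_1^{\sigma_1} < \cdots$ by the recipe of Theorem~\ref{nonuniform} but translated above all thresholds already defined. At stage $s+1$ each pair $\{t, s+1\}$ is colored by the highest-priority sub-strategy whose designated half-interval contains $t$: primary sub-strategies RED-trap outbound edges from their half-interval, and secondary sub-strategies BLUE-trap outbound edges from the complementary half-interval.

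\textbf{Verification.} Suppose for contradiction that some $\sigma \in W_{f(e)}$ encodes an actual path decomposition. By induction on the priority of $\sigma$, the sub-priorities and thresholds of all higher-priority candidates eventually stabilize. Since at least one of $P_b^\sigma(0), P_r^\sigma(0)$ converges, the sub-priority within $\sigma$ also stabilizes, and so do $s_0^\sigma, s_1^\sigma$. The verification of Theorem~\ref{nonuniform} then runs within the half-intervals assigned to $\sigma$: the primary sub-strategy traps its path inside the relevant initial segment, and after invoking the range-cover condition a suitable $k,\ell$ appears so that the secondary sub-strategy traps its path inside $(s_0^\sigma, s_1^\sigma]$. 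Both $P_b^\sigma$ and $P_r^\sigma$ are finite, contradicting that their union equals $\mathbb{N}$.

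\textbf{Main obstacle.} The most delicate interaction is between a higher-priority secondary $S_r^{\sigma'}$---which BLUE-colors \emph{every} outbound edge from the interval $(s_0^{\sigma'}, s_1^{\sigma'}]$---and a lower-priority primary $S_b^\sigma$ whose stable $P_b^\sigma(0)$ might fall inside that interval, opening a BLUE escape channel through which $P_b^\sigma$ could extend indefinitely. I plan to manage this by delaying the choice of $s_1^{\sigma'}$ until the initial values for lower-priority candidates have stabilized, then selecting $s_1^{\sigma'}$ so that $P_b^\sigma(0)$ either lies in $\sigma$'s own half-interval or already sits in a higher-priority RED-trapped initial segment (in which case $S_b^\sigma$ succeeds automatically). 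Whenever a higher-priority sub-priority flips, the lower-priority thresholds are re-initialised. Finiteness of $W_{f(e)}$ is essential: it bounds both the number of candidates and the number of injuries to any given threshold, so every sub-strategy stabilizes in the limit.
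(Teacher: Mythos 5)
Your skeleton matches the paper's (recursion theorem, a pair of path-killing strategies per element of $W_{f(e)}$, nested trap intervals whose outbound edges are colored opposite to the targeted path), and you have correctly located the central difficulty: a lower-priority path whose early elements land inside a higher-priority strategy's trap interval of the \emph{same} color as that path has an escape channel. But your resolution of this obstacle does not work. First, having the higher-priority secondary strategy ``delay the choice of $s_1^{\sigma'}$ until the initial values for lower-priority candidates have stabilized'' inverts the priority ordering: $W_{f(e)}$ is only c.e., so new candidates may appear arbitrarily late (after the relevant edges are already colored), and a candidate whose approximation of $P_b^\sigma(0)$ never settles would force $s_1^{\sigma'}$ to be revised infinitely often, so $S_r^{\sigma'}$ never traps $P_r^{\sigma'}$ even though $\sigma'$ may well be a genuine decomposition. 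Second, $s_1^{\sigma'}$ is not a free parameter: to trap $P_r^{\sigma'}$ it must satisfy $s_1^{\sigma'} \ge P_r^{\sigma'}(k)$, so if the opponent arranges $s_0^{\sigma'} < P_b^{\sigma}(0) \le P_r^{\sigma'}(k)$, every admissible choice places $P_b^\sigma(0)$ inside the BLUE-out interval and $P_b^\sigma$ can then be infinite (large numbers outside all intervals are mutually BLUE by default). You cannot ``select $s_1^{\sigma'}$ so that $P_b^\sigma(0)$ lies in $\sigma$'s own half-interval''; that value is the opponent's to choose. Third, guarding only the first element is not enough: even with $P_b^\sigma(0)$ safely placed, the path could enter a higher-priority BLUE-out interval at a later position and escape from there; this is why the paper applies the range-cover condition (all of $[0,s_{j-1}]$ already exhausted before position $k_j$) to \emph{every} strategy except the topmost one, not just to the secondary member of each pair.

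The idea you are missing is the paper's priority organization: rather than a static ordering by enumeration into $W_{f(e)}$ with a dynamic choice only \emph{within} each index, the paper dynamically ranks \emph{all} $2|W_{f(e)}|$ sub-strategies together at each stage, by how long their trapping parameters have been stable ($t_0(z,i,s)$ for the top slot, the cover-condition witnesses $t_j(z,i,s)$ for the rest), and assigns the nested intervals $[0,s_0], (s_0,s_1], \dots$ in that dynamic order. This makes the interval a path receives depend on when its own trapping condition stabilizes, so the cross-index interference you describe never arises. The verification then takes $m$ to be the greatest $j$ for which $t_j = \lim_s t_j(s)$ exists: slots $0,\dots,m$ trap their paths in finite intervals, and any pair $(z,i)$ not occupying a slot must have $P_{z,i}$ finite or fail to cover $\mathbb{N}$, since otherwise $t_{m+1}$ would converge. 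Without this (or an equivalent repair), your construction as described can be defeated, e.g.\ by a late-enumerated index whose BLUE path starts just above $s_0^{\sigma_0}$.
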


\begin{proof}
  Let $f$ be a partial computable function. We create a computable
  coloring with index $e$ such that if $f(e)$ is defined, and if
  $W_{f(e)}$ is finite, then no element of $W_{f(e)}$ is an index for
  a $\Delta^0_2$ path decomposition for $c$.

  Our construction is in stages. During stage $s$ of the construction,
  for every $t\leq s$, we color the pair $\{t,s+1\}$.

  As in the proof of Theorem~\ref{nonuniform}, we use an index, $e$,
  for the coloring that we are constructing.  We begin computing
  $f(e)$, and while we wait for it to halt, we color everything BLUE
  with everything else.

  If $f(e)$ halts, then we begin enumerating $W_{f(e)}$. Let
  $W_{f(e),s}$ be the stage $s$ approximation to $W_{f(e)}$.

  For each $i$ in $W_{f(e)}$, let $P_{b,i}$ and $P_{r,i}$ be the
  potential paths given by index $i$, and let $P_{b,i,s}$ and
  $P_{r,i,s}$ be their stage $s$ approximations, as in the proof of
  Theorem~\ref{nonuniform}. We again assume
  $P_{b,i,s}(x), P_{r,i,s}(x) \le s$ for all $x$.

  We will use $z$ as a variable for a color (BLUE or RED), as well as
  for a letter for a color ($b$ or $r$). We will write $1-z$ to refer
  to the other color.

  For each $i\in W_{f(e)}$, we will have two strategies $S_{b,i}$ and
  $S_{r,i}$, which will work to ensure that if $(P_{b,i},P_{r,i})$
  forms a path decomposition for $c$, then $P_{b,i}$ (respectively
  $P_{r,i}$) is finite. Note that if both $S_{b,i}$ and $S_{r,i}$
  achieve their goals, then $(P_{b,i},P_{r,i})$ cannot form a path
  decomposition for $c$.

  % \medskip

  % At stage $s$, if every $P_{x,i,s}$ with $i\in W_{f(e)}$ is empty,
  % then we continue to color everything BLUE.

  We will again arrange our strategies in a priority ordering based on
  length of stability of their first element.  That is, at stage
  $s$, define $t_0(z,i,s)$ to be least such that for every
  $t \in [t_0(z,i,s), s]$, $P_{z,i,t}(0) = P_{z,i,s}(0)$.  From
  $z \in \{b,r\}$ and $i \in W_{f(e), s}$, choose a pair $(z_0,i_0)$
  with $t_0(z,i,s)$ least (deciding ties by G\"odel numbering) to be
  our strategy of highest priority at stage $s$, and let
  $t_0(s) = t_0(z_0,i_0,s)$.

  Let $s_0$ be least such that $s_0 \ge t_0(s)$ and $S_{z_0,i_0}$ has
  highest priority at stage $s_0$.  For every $t \le s_0$, we color
  the pair $\{t,s+1\}$ with color $1-z_0$.  This completes the action
  for $S_{z_0,i_0}$.

  Next, for every pair $(z,i)$ other than $(z_0, i_0)$, consider
  whether there are $k,\ell \le s$ with
  \[
    \text{range}(P_{z,i,s}\!\!\upharpoonright_k) \sqcup
    \text{range}(P_{1-z,i,s}\!\!\upharpoonright_\ell) \supseteq
    [0,s_0]
  \]
  and $P_{z,i,s}(k) > s_0$.  For those pairs for which there are such
  $k$ and $\ell$, fix the least such $k$ and $\ell$ and let $t_1(z,i,s)$ be least
  such that
  $t_1(z,i,s) \ge P_{z,i,s}(k), P_{1-z,i,t}\!\!\upharpoonright_\ell =
  P_{1-z,i,s}\!\!\upharpoonright_\ell$ and
  $P_{z,i,t}\!\!\upharpoonright_{k+1} =
  P_{z,i,s}\!\!\upharpoonright_{k+1}$ for every
  $t \in [t_1(z,i,s), s]$.

  From those pairs with $t_1(z,i,s)$ defined, choose a pair
  $(z_1,i_1)$ with $t_1(z,i,s)$ least (deciding ties by G\"odel
  numbering) to be the strategy of next highest priority at stage $s$,
  and let $t_1(s) = t_1(z_1,i_1,s)$.  Let $s_1$ be least such that
  $s_1 \ge t_1(s)$ and $S_{z_1, i_1}$ has second highest priority at
  stage $s_1$.  For every $t \in (s_0, s_1]$, we color the pair
  $\{t,s+1\}$ with color $1-z_1$.  This completes the action for
  $S_{z_1,i_1}$.

  We continue in this fashion until we reach a $j$ where $t_j(z,i,s)$
  is not defined for any pair $(z,i)$.  We then color $\{t,s+1\}$ BLUE
  for any remaining $t \le s$ and end the stage.  This completes the
  construction.

  We now verify that if $W_{f(e)}$ is finite, then for every
  $i \in W_{f(e)}$, $(P_{b,i}, P_{r,i})$ is not a path decomposition
  for $c$.  Note that for every pair $(z,i)$, $t_j(z,i,s)$ is
  non-decreasing in $s$, and if $t_j(z,i,s)$ is undefined, then for
  all $t > s$ with $t_j(z,i,t)$ defined, $t_j(z,i,t) > s$.  It follows
  that the same holds for $t_j(s)$.

  Define $m$ to be greatest such that for all $j < m$,
  $t_j = \lim_s t_j(s)$ converges.  Let $(z_j, i_j)$ be the pair
  chosen for priority $j$ for cofinitely many stages (the pair
  defining $t_j = t_j(z_j,i_j,s)$ for almost every $s$).  The
  existence of such a pair follows from the above discussion, along
  with the assumption that $W_{f(e)}$ is finite. Let $k_j$ be the
  value $k$ chosen for this pair at cofinitely many stages.

  The following two claims will complete the proof of the result.

\begin{claim}
  For $j < m$, if $P_{z_j, i_j}$ is a monochromatic path with color
  $z_j$ and disjoint from $P_{1-z_j, i_j}$, then it is finite.
\end{claim}

\begin{proof}
  By a simple induction, the values $s_{j-1}$ and $s_j$ are eventually
  chosen the same at cofinitely many stages $s$ (taking
  $s_{-1} = -1$).  By construction, $c\{x,y\}$ is $1-z_j$ for every
  $s_{j-1} < x \le s_j < y$.  By our choice of $(z_j, i_j)$, every
  point in $[0, s_{j-1}]$ lies either on $P_{1-z_j, i_j}$ or is one of
  $P_{z_j, i_j}(0), \dots P_{z_j, i_j}(k_j-1)$.  Also,
  $P_{z_j, i_j}(k_j) \in (s_{j-1}, s_j]$.  So after
  $P_{z_j, i_j}(k_j)$, $P_{z_j, i_j}$ cannot contain any elements
  outside of $(s_{j-1}, s_j]$, and so must be finite.
\end{proof}

\begin{claim}
  If $i \in W_{f(e)}$ and $(z,i)$ is not one of the $(z_j, i_j)$ for
  any $j < m$, then $P_{z, i}$ is finite or
  $\text{range}(P_{z,i})\sqcup \text{range}(P_{1-z,i})$ is not all of
  $\mathbb{N}$.
\end{claim}

\begin{proof}
  Suppose not.  Since
  $\text{range}(P_{z,i})\sqcup \text{range}(P_{1-z,i})$ is all of
  $\mathbb{N}$ and $P_{z,i}$ is infinite, there are $k$ and $\ell$
  with
  \[
    \text{range}(P_{z,i}\!\!\upharpoonright_k) \sqcup
    \text{range}(P_{1-z,i}\!\!\upharpoonright_\ell) \supseteq [0,s_m]
  \]
  and $P_{z,i}(k) > s_m$, where $s_m$ is the value chosen for
  $(z_m, i_m)$ at cofinitely many stages.  At sufficiently large
  stages, $P_{z,i}\!\!\upharpoonright_{k+1}$ and
  $P_{1-z,i}\!\!\upharpoonright_k$ will converge, and $i$ will have
  appeared in $W_{f(e)}$, and so $t_{m+1}(z,i,s)$ will be defined and
  unchanging at sufficiently large $s$.  So $t_{m+1}(s)$ will be
  defined and bounded by $t_{m+1}(z,i,s)$ at all of these stages.
  Since $t_{m+1}(s)$ is nondecreasing, it must have a limit, contrary
  to our choice of $m$.
\end{proof}

Theorem \ref{nonuniform2} now follows: for any $i \in W_{f(e)}$,
either $P_{b,i}$ and $P_{r,i}$ are both finite, one of $P_{b,i}$ or
$P_{r,i}$ fails to be a monochromatic path of the appropriate color,
or there are elements of $\mathbb{N}$ which appear on neither or both paths.
In all cases, $(P_{b,i}, P_{r,i})$ is not a path decomposition for
$c$.
\end{proof}

\section{$\mathsf{RPD}$ compared to Ramsey's Theorem for pairs}\label{sec:ramsey}

One fact about (infinite) Ramsey's Theorem that is regularly used is
that for every coloring $c:[\mathbb{N}]^2 \rightarrow r$ and every
infinite set $X$, there is an infinite homogeneous set
$H \subseteq X$. However, a path decomposition of a set
$X\subseteq \mathbb{N}$ for the restricted coloring
$c:[X]^2 \rightarrow r$ does not help us to find a path decomposition
for the unrestricted coloring $c:[\mathbb{N}]^2 \rightarrow r$.

There is a proof which uses compactness to show the infinite version
of Ramsey's theorem implies the finite version.  For example, see
\citet{MR1044995}.  By Theorem~\ref{three}, we know this compactness
argument fails for the Rado Path Decomposition Theorem.  A compactness
argument breaks down since the paths linking numbers below $m$ might
also involve numbers larger than $m$.

\section{Corollaries in Mathematical Logic}\label{sec:logic}

For a reference for the terms used in this section we suggest
\citet{MR3244278}.  The existence of a non-principal ultrafilter on
the natural numbers is a strong assumption that unfortunately cannot
be shown in Zermelo Fraenkel set theory, see \citet{MR0176925}; the
axiom of choice is sufficient see \citet{MR1940513}.
% The existence of an ultrafilter cannot be shown in ZF.
By independent results of \citet{MR3225587}, \citet{MR2262315}, and
\citet{MR2950192}, the ultrafilter proof of Rado Path Decomposition
implies that for every $r$-coloring $c$ of $[\mathbb{N}]^2$ there is a
path decomposition arithmetical in $c$, and as a statement of second
order arithmetic the Rado Path Decomposition Theorem holds in  $\mathsf{ACA}_0$.

The same result can be obtained by an examination of the cohesive
proof in Section~\ref{Cohesive_Proof_Subsection}.  In fact, that proof
can give us more.  A careful analysis shows that a path decomposition
can always be found in the jump of the cohesive set $C'$. The key
issue is that exactly one $N(m,j)$ is large (with respect to our
cohesive set $C$).  It is $\Delta^C_2$ (but not computable in $C$) to
determine which one.

\citet{Jockusch.Stephan:93} have showed that $\mathbf{d}$ is PA over
${0'}$ if and only if there is a $C$ which is cohesive with respect to the
collection of all computable sets and $C' \leq_T \mathbf{d}$.  As
there is a $\mathbf{d}$ which is PA over $0'$ with
$\mathbf{d}' = 0''$, it follows that that there is always a path
decomposition whose jump is bounded by $c''$.

For $2$-colorings, Theorem~\ref{bound} shows this bound can be
improved to $\Delta^c_2$.  For stable colorings the bound can also be
improved to $\Delta^c_2$.  (Use the stable proof of $\mathsf{RPD}$ and note that
determining $m$'s color is $\Delta^c_2$.)
  
Theorem~\ref{halting} shows that we cannot expect to do better than
$\Delta^c_2$. So for stable and $2$-coloring the bound of $\Delta^c_2$
is tight.

For more than two colors, we do not have an exact calibration of the
effectivity of path decomposition.

% \todo{Greg: I added some more questions}

\begin{question}
  Does every $3$-coloring $c$ have a $\Delta^c_2$ path decomposition?

\end{question}

  \begin{question}\label{PA}
    Is there an unstable $3$-coloring $c$ such that every path
    decomposition is PA over $0'$?
  \end{question}

\begin{question}
  Does increasing the number of colors past $3$ have any effect on the
  above two questions?

\end{question}

Theorem~\ref{halting} shows that as a statement of second order
arithmetic the Rado Path Decomposition Theorem implies
$\mathsf{ACA}_0$ over $\mathsf{RCA}_0$.  One can observe that the only
induction used is $\Sigma^0_1$ and hence available in $\mathsf{RCA}_0$.

One might wonder why we cannot use the generic construction to answer
Question~\ref{PA} by building a path decomposition that avoids the
cone of degrees above $\mathbf{0'}$.  The problem is that forcing
$\Sigma^G_1$ statements (like does $\Phi^G(w) \converge$) is
$\Sigma^X_2$ not $\Sigma^X_1$.  The ends of finite paths $P_j$ must
have color $j$ and determining this is not $\Sigma^X_1$.

\bibliographystyle{plainnat} \bibliography{rado}

\end{document}